\theoremstyle{plain}
\numberwithin{equation}{section}
\newtheorem{thm}{Theorem}[section]
\newtheorem{prop}[thm]{Proposition}
\newtheorem{lemm}[thm]{Lemma}
\newtheorem{defi}[thm]{Definition}
\newtheorem{claim}[thm]{Claim}
\theoremstyle{remark}
\newtheorem{rema}[thm]{Remark}
\renewcommand{\div}{{\rm div}}
\newcommand{\curl}{{\rm curl}} 
\newcommand{\Z}{\mathbb{Z}}  
\newcommand{\N}{\mathbb{N}}
\newcommand{\R}{\mathbb{R}}
\newcommand{\Supp}{\rm Supp }
\newcommand{\D}{\Delta}
\newcommand{\bq}{\begin{equation}}
\newcommand{\eq}{\end{equation}}
\begin{document}

\title[On the wellposedness of the Navier-Stokes-Maxwell system]{Wellposedness of the Navier-Stokes-Maxwell equations}

\author{Pierre Germain}
\address{The Courant Institute,\\ New-York University\\
Mercer Street\\   USA}
\email{pgermain@cims.nyu.edu}
\author{Slim Ibrahim}
\address{Department of Mathematics and Statistics,\\ University of Victoria\\
 PO Box 3060 STN CSC\\   Victoria, BC, V8P 5C3\\ Canada}
\email{ibrahim@math.uvic.ca}
\urladdr{http://www.math.uvic.ca/~ibrahim/}
\author{Nader Masmoudi}
\address{The Courant Institute,\\ New-York University\\
Mercer Street\\   USA}
\email{masmoudi@cims.nyu.edu}
\subjclass{35Q30, 82C31, 76A05.}
\keywords{ Navier-Stokes equations, Maxwell equations,  MHD, global well-posedness.}
\thanks{P. Germain is partially supported by NSF grant \# 1101269 and a start-up grant from the Courant Institute.}
\thanks{S. Ibrahim is partially supported by NSERC grant \# 371637-2009 grant}
\thanks{N. Masmoudi is partially supported by NSF grant \# 1211806 grant.}

\begin{abstract}
We study the local and global wellposedness of a full system of Magneto-Hydro-Dynamic 
equations. The system is a coupling of the forced (Lorentz force) 
incompressible Navier-Stokes equations with the Maxwell equations 
through Ohm's law for the current. 
We show the local existence of mild solutions for arbitrarily 
large data in a space similar to the scale invariant spaces classically used for Navier-Stokes. 
These solutions are global if the initial data are small enough. 
Our results not only simplify and unify the proofs for the space dimensions
two and three but also refine those in \cite{IK}. The main simplification comes from an 
{\it a priori} $L^2_t(L^\infty_x)$ estimate for solutions of the forced Navier-Stokes equations.
\end{abstract}
\maketitle
\section{Introduction}

The purpose of this paper is the study of  the following full Magneto-Hydro-Dynamics system (MHD):
\begin{equation}
\label{NS}
\ \left\{
\begin{array}{rclll}
\frac{\partial v}{\partial t} +   v\cdot\nabla  v - \nu
    \Delta v +\nabla p & =&     j {{\times}}    B 
 \\
\partial_t E  - \curl\, B&=&-j \\
\partial_tB  +  \curl\, E
  & = & 0 &
\\
 \div v=\div B &=&0
\\
 \sigma( E + v {{\times}}    B) &=&j
\end{array}
\right.
\end{equation}
subject to the initial data 
\begin{equation*}
 \label{ini}
v_{|t=0} = v^0, \quad B_{|t=0} = B^0, \quad E_{|t=0}= E^0.
\end{equation*}
Here, $v,E,B:\R^+_t\times\R^d_x\longrightarrow \R^3$ are vector fields defined on $\R^d$  ($d=2$ or $3$).
The vector field  $v=(v_1,...,v_d)$ is the velocity of the fluid, $\nu$ its viscosity and  the scalar function  
$p$  stands for  the   pressure. The vector fields $E$ and $B$ are the electric and magnetic fields, respectively, and 
$j$ is the electric current  given by  Ohm's law (the fifth equation of the system, where $\sigma$ is the electric 
resistivity). 
The force term $j \times B $ in the Navier-Stokes equations comes from the Lorentz force under a quasi-neutrality assumption of the net charge carried by the fluid. 
Note that the pressure $p$ can be  recovered from $v$ and $j\times B$ via an explicit 
Cald\'eron-Zygmund operator (see \cite{Chemin95} for instance).
The second  equation in \eqref{NS} is the Amp\`ere-Maxwell equation for an electric field $E$. The third equation is nothing but Faraday's law.  
For a detailed introduction to MHD, we refer to Davidson \cite{Davidson01} and Biskamp \cite{Biskamp93}. 

Note that in the 2D case, the functions $v$, $E$, $B$, and $j$ are defined on the whole space $\R^2$ with values in $\R^3$. In this case, the operator $\nabla$  is given by
$$
\nabla=(\partial_{x_1},\partial_{x_2},0)^T.
$$
Thus  
$$
\operatorname{div} v:=\partial_{x_1}v_1+\partial_{x_2}v_2,\quad \nabla p:=(\partial_{x_1}p,\partial_{x_2}p,0)^T,
$$
and
$$
 \hbox{curl}\,F:=(\partial_{x_2}F_3,-\partial_{x_1}F_3,\partial_{x_1}F_2-\partial_{x_2}F_1)^T.
$$

\bigskip

{\it In the following, we take $\sigma = \nu = 1$ to alleviate the notations.}

Multiply the Navier-Stokes equations in  \eqref{NS} by $v$, the Amp\`ere-Maxwell equations by $(B,E)^T$ and integrate (using the divergence free condition on the velocity); this gives the formal energy identity 
\begin{eqnarray*} 
\label{energy}
\frac12  \frac{d}{dt} \big[\|v\|_{L^2}^2 +\|B\|_{L^2}^2 +\|E\|_{L^2}^2
   \big]   + \|j\|_{L^2}^2  + \|\nabla v \|_{L^2}^2 = 0. 
\end{eqnarray*}
This identity shows that the energy is dissipated by the viscosity and the electric 
resistivity.  It also suggests that one should be able to construct a global finite energy weak solution (\`a la Leray) for data lying in  $L^2(\R^d)$. 
However, this intuitive expectation remains an interesting open problem for \eqref{NS} in both dimensions $d=2,3$.  
Indeed, given a standard approximating scheme, it is hard to obtain compactness of the solutions, especially for the magnetic field due to the hyperbolicity of  Maxwell's equations. 
In dimension 2, the equation is energy critical, but running a fixed point argument for data $(v^0,E^0,B^0)$ only in $L^2(\R^d)^3$ seems very difficult due to the 
term $E\times B$.\\ 

Imposing more regularity on the initial electro-magnetic field, existence results are known. Recently, for initial data 
$(v^0,E^0,B^0)\in L^2(\R^2)\times \big(H^s(\R^2)\big)^2$ with $s>0$, Masmoudi in \cite{Masmoudi10jmpa} proved the existence and uniqueness of global strong solutions to \eqref{NS}.  
His proof relies on the use of the energy inequality combined with a logarithmic inequality  that enabled him to upper estimate the $L^\infty$ norm of the velocity field by 
the energy norm and higher Sobolev norms. It is also interesting to note 
that the proof in \cite{Masmoudi10jmpa} does not use the divergence free condition of the magnetic field, nor the decay property of the linear part coming from Maxwell's equations, 
namely
\begin{equation}
\label{Maxwell}
 \ \left\{
\begin{array}{rclll}
\frac{\partial E}{\partial t}    - \hbox{curl}\, \,B+ E
  & = &f, \\
\\
\frac{\partial B}{\partial t}   +  \hbox{curl}\, \,E
  & = & 0,\\
\\
\nabla\cdot B   &=&0.  
\end{array}
\right.
\end{equation}

Another line of research was pursued by Ibrahim and Keraani \cite{IK} who considered data $(v_0,E_0,B_0) \in \dot{B}^{1/2}_{2,1} (\mathbb{R}^3) \times \big( \dot{H}^{1/2}(\mathbb{R}^3)\big)^2$ in dimension $d=3$, and $(v_0,E_0,B_0) \in \dot{B}^{0}_{2,1} (\mathbb{R}^2) \times \big(L^2_{log}(\mathbb{R}^2)\big)^2$ in dimension $d=2$ (see below for the definiton of these functional spaces).
These authors built up strong solutions by using parabolic regularization arguments giving control of the $L^\infty$ norm of the velocity field of the solution. More recently, Ibrahim and Yoneda constructed local in time solution for non-decaying initial data on the torus. See \cite{IY} for more details. 

In this paper, we follow up on the work of Ibrahim and Keraani by running a fixed point argument to obtain mild solutions, but taking the initial velocity field in the natural Navier-Stokes space $H^{\frac d2-1}$. Our main theorem extends the earlier results that were mentioned in many respects: the regularity of the initial velocity and electromagnetic fields is lowered, and we unify the proofs in the cases of space dimension 2 and 3. One of the key ingredients will be to use an $L^2 L^\infty$ estimate on the velocity field, which simplifies greatly the fixed point argument; in particular, the weak decay for the electromagnetic field is not needed any more in dimension 3.

Before stating our main result, we need a few definitions.

{ \begin{defi}
First, let $\mathcal{P}$ denote the Leray projection on divergence-free vector fields.

A function $\Gamma:=(v,E,B)$ with ${\rm div}(v)={\rm div}(B)=0$ is said to be a mild solution on a time interval $[0,T]$ of the full MHD problem \eqref{NS} 
if $\Gamma\in {\mathcal C}([0,T],\dot H^{\frac d2-1})$ and satisfies the integral equation
$$
\Gamma(t)=e^{t{\mathcal A}}\Gamma(0)+\int_0^te^{(t-t'){\mathcal A}}{\mathcal N}(\Gamma(t'))\;dt',
$$
with 
$$
{\mathcal A}=\left(
\begin{array}{ccc}
\Delta&0&0\\
0&-I&\curl\\0&-\curl &0
\end{array}
\right)
$$
and ${\mathcal N}(\Gamma)=\big({\mathcal P}(-\nabla(v\otimes v)+E\times B+(v\times B)\times B), -v\times B,0\big)^T$. 
\end{defi}
The functional analytic framework we will use is the following.
\begin{defi}
Let $\Delta_q$ denote the dyadic 
frequency localization operator defined in section 2.
For $s, t\in\R$ and $\alpha\geq0$ define the space ${\dot H}^{s,t}_{\alpha}$ by its norm
$$
 \|\phi\|_{\dot H^{s,t}_\alpha   }^2:= 
\sum_{q\leq 0}2^{2qs}\|\Delta_q\phi\|_{L^2}^2+ 
\sum_{q>0}q^\alpha2^{2qt}\|\Delta_q\phi\|_{L^2}^2.
 $$
We will also use the short-hands
$$
\dot{H}^s = \dot{H}^{s,s}_0 \quad,\quad \dot H^s_{\rm log}:=\dot H^{s,s}_1 \quad \mbox{and} \quad \dot H^{s,t} := \dot H^{s,t}_0.
$$ 
Finally, define $\tilde L^r_T{   {\dot H}^{s,t}_\alpha}$ by its norm
 $$
 \|\phi\|_{  \tilde L^r_T{   {\dot H}^{s,t}_\alpha}   }^2:= 
\sum_{q\leq 0}2^{2qs}\|\Delta_q\phi\|_{L^r_TL^2}^2+ 
\sum_{q>0}q^\alpha2^{2qt}\|\Delta_q\phi\|_{L^r_TL^2}^2,
 $$
with obvious generalizations to $\tilde L^r_T  {\dot H}^{s}$ etc...
\end{defi}

The space $ \dot H^s_{\rm log}$ is articulated on the standard 
homogeneous Sobolev space $\dot H^s$ with an extra logarithmic 
weight for the high frequency part. 
The space $\dot H^{s,t}$ is nothing but the usual 
Sobolev space $\dot H^t$ for high frequencies while it behaves like 
$\dot H^{s}$ for low frequencies. If $s>t$, it is not difficult to 
see that $\dot H^{s,t}=\dot H^{s}+\dot H^{t}$.
The $\tilde L$ spaces were first used by 
Chemin and Lerner \cite{CL95}.

Our main result can  be stated as follows.

\begin{thm}
\label{MAIN} 
\begin{itemize}
 \item
In dimension two and for any 
$$
\Gamma^0:=(v^0, E^0,B^0) \in L^2(\mathbb R^2)\times L^2_{\rm log}(\mathbb R^2)
\times L^2_{\rm log}(\mathbb R^2),
$$
there exists $T>0$ and a unique mild solution $\Gamma=(v,E,B)$ of 
\eqref{NS} with initial data $\Gamma^0$ and  
\begin{equation*}
\begin{split}
& v \in \tilde L^\infty(0,T; L^2) \cap L^2(0,T; \dot H^1\cap L^\infty) \\
& E\in \tilde L^\infty(0,T;  L^2_{\rm   log})\cap L^2(0,T;L^2_{\rm   log}) \\
& B \in {\tilde L}^\infty(0,T; L^2_{\rm   log}) \cap  L^2(0,T;\dot{H}^{1,0}).
\end{split}
\end{equation*}
Moreover, the solution is global (i.e. $T=\infty$) if the initial data is sufficiently small in $L^2 \times L^2_{\rm log} \times L^2_{\rm log}$.
\item 
In dimension three and for any 
$$
\Gamma^0:=(v^0, E^0,B^0) \in \dot H^\frac12(\mathbb R^3)\times \dot H^\frac12(\mathbb R^3)\times \dot H^\frac12(\mathbb R^3)
$$ 
there exists $T>0$ and a unique mild solution $\Gamma=(v,E,B)$ of \eqref{NS} with initial data $\Gamma^0$ and  
\begin{equation*}
\begin{split}
& v \in \tilde L^\infty(0,T; \dot H^\frac12) \cap L^2(0,T; \dot H^\frac32\cap L^\infty) \\
& E\in \tilde L^\infty(0,T;    \dot H^\frac12)\cap L^2(0,T;\dot H^\frac12) \\
& B \in {\tilde L}^\infty(0,T; \dot H^\frac12) \cap L^2(0,T;    \dot{H}^{\frac32,\frac12}).
\end{split}
\end{equation*}
Moreover, the solution is global (i.e. $T=\infty$) if the initial data is 
sufficiently small in $\dot H^\frac12\times \dot H^\frac12\times \dot H^\frac12$.

\end{itemize}
\end{thm}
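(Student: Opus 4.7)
The plan is a Picard iteration on the Duhamel formulation
$$\Phi(\Gamma)(t) := e^{t\mathcal{A}}\Gamma^0 + \int_0^t e^{(t-t')\mathcal{A}}\mathcal{N}(\Gamma(t'))\,dt'$$
in the product resolution space $X_T$ whose norm is the sum of those listed in the conclusion of the theorem; note that $v\in L^2_TL^\infty$ is already built into $X_T$ in both dimensions. Proving that $\Phi$ maps a small ball of $X_T$ into itself and is a contraction immediately yields a unique mild solution, global if the data are small and local in general.

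The first technical ingredient is linear estimates for $e^{t\mathcal{A}}$ via a Littlewood--Paley decomposition. The heat part is classical maximal regularity,
$$\|e^{t\Delta}v^0\|_{\tilde L^\infty_T\dot H^{d/2-1}\cap L^2_T\dot H^{d/2}}\lesssim \|v^0\|_{\dot H^{d/2-1}},$$
supplemented by the $L^2_TL^\infty$ a priori bound for solutions of the forced heat equation advertised in the abstract, obtained by summing dyadic Bernstein inequalities; this is where in $d=2$ a logarithm appears, explaining the $\dot H^s_{\rm log}$ scale on the electromagnetic side. For the electromagnetic block, diagonalizing the damped Maxwell symbol on divergence-free vector fields shows that $E$ is damped at every frequency while $B$ only gains smoothing at high frequencies, matching exactly the spaces $\tilde L^\infty_T\dot H^{d/2-1}_{\rm log}\cap L^2_T\dot H^{d/2-1}_{\rm log}$ for $E$ and $\tilde L^\infty_T\dot H^{d/2-1}_{\rm log}\cap L^2_T\dot H^{d/2,d/2-1}$ for $B$.

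The second ingredient is the multilinear estimates on $\mathcal{N}$. The guiding principle is to \emph{always place $v$ in $L^2_TL^\infty$}, so that in the three nonlinear terms containing $v$,
\begin{align*}
\|\mathcal{P}\nabla(v\otimes v)\|_{L^2_T\dot H^{d/2-2}} &\lesssim \|v\|_{L^2_TL^\infty}\|v\|_{\tilde L^\infty_T\dot H^{d/2-1}},\\
\|v\times B\|_{L^2_T\dot H^{d/2-1}_{\rm log}} &\lesssim \|v\|_{L^2_TL^\infty}\|B\|_{\tilde L^\infty_T\dot H^{d/2-1}_{\rm log}},\\
\|(v\times B)\times B\|_{L^2_T\dot H^{d/2-1}_{\rm log}} &\lesssim \|v\|_{L^2_TL^\infty}\|B\|_{\tilde L^\infty_T\dot H^{d/2-1}_{\rm log}}^2,
\end{align*}
one factor of $v$ supplies the time integrability while $B$ stays in its energy space. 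The remaining term $E\times B$ is handled by Bony's paraproduct, using the damping on $E$ and the high-frequency smoothing on $B$, schematically
$$\|E\times B\|_{L^2_T\dot H^{d/2-1}_{\rm log}}\lesssim \|E\|_{\tilde L^\infty_T\dot H^{d/2-1}_{\rm log}}\|B\|_{L^2_T\dot H^{d/2,d/2-1}}.$$
With these estimates the contraction is standard: small data give global existence directly, while for large data one uses that the relevant $L^2_T$-type pieces of $\|e^{t\mathcal A}\Gamma^0\|_{X_T}$ tend to $0$ as $T\to 0$, by dominated convergence on the dyadic decomposition, producing a small ball for $T$ small.

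The \emph{main obstacle} is the sole nonlinear term without a $v$-factor, $E\times B$, since $B$ has no smoothing at low frequencies and since in dimension two the product of two $L^2$ functions lies only in a weak logarithmic Sobolev scale. The spaces $\dot H^{s,t}_\alpha$ and $\dot H^s_{\rm log}$ are engineered precisely to absorb this logarithmic loss in the endpoint paraproduct estimate, and verifying that the log weight propagates through every nonlinear interaction is the technical heart of the argument. The use of the $L^2L^\infty$ bound on $v$ in place of the Maxwell decay estimates exploited in \cite{IK} is what makes it possible to avoid any dispersive input in dimension three and to unify the treatments of the two space dimensions.
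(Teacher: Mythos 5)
Your small-data/global scheme follows the paper closely (same Duhamel fixed point, same heat maximal regularity plus the $L^2_TL^\infty$ bound of Lemma \ref{hr refined}, same damped-Maxwell energy/decay estimates), but your treatment of \emph{local existence for large data} has a genuine gap. You argue that it suffices that the $L^2_T$-type pieces of $\|e^{t\mathcal A}\Gamma^0\|$ shrink as $T\to 0$. This does not make $\Phi$ a contraction: after inserting $\Gamma = e^{t\mathcal A}\Gamma^0 + (\text{unknown})$, the nonlinearity produces terms that are \emph{linear} in the unknown with coefficients controlled only by $\tilde L^\infty_T$-norms of the free electromagnetic evolution, e.g. $E\times B^{\rm free}$, $u\times B^{\rm free}$ and $(u\times B^{\rm free})\times B^{\rm free}$, whose operator norms are of size $\|B^{\rm free}\|_{\tilde L^\infty_T\dot H^{d/2-1}_\alpha}\sim\|B^0\|$ (or its square). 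Maxwell's equations have no smoothing and $B$ is not damped, so these $L^\infty$-in-time norms do not tend to $0$ as $T\to 0$; no dominated-convergence argument applies to them, and for large $B^0$ the Lipschitz constant of $\Phi$ stays bounded below. This is exactly why the paper does something different: it splits $\Gamma^0$ into a regular part (in $H^2$), solved globally-in-regularity by the result of \cite{IY}, plus a part that is \emph{small} in the critical space, and runs the fixed point only for the small perturbation; the dangerous linear terms then carry coefficients like $\|B_r\|_{L^1_TH^2}$, which do vanish as $T\to 0$ thanks to the extra regularity, while the quadratic terms are handled by the smallness of the perturbed data. Without such a decomposition (or some substitute idea) your local-existence step fails.

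A second, more technical problem is that your schematic bilinear estimates are not the ones that can close the argument, and two of them are false as stated. The bound $\|E\times B\|_{L^2_T\dot H^{d/2-1}_{\rm log}}\lesssim\|E\|_{\tilde L^\infty_T\dot H^{d/2-1}_{\rm log}}\|B\|_{L^2_T\dot H^{d/2,d/2-1}}$ cannot hold: at low frequencies both factors have only $\dot H^{d/2-1}$ regularity and no $L^\infty_x$ control, so the product lives at regularity $d/2-2<0$ (in $d=2$ it is merely $L^1_x$), not at the level $d/2-1$. The correct targets are the negative-regularity spaces of Proposition \ref{lem-prod}, namely $\tilde L^2_T\dot{\mathcal B}^{-1}_{2,1}+L^1_TL^2$ in $d=2$ (with the logarithmic weights on \emph{both} $E$ and $B$ absorbing the 2D endpoint loss) and $\tilde L^2_T\dot{\mathcal B}^{-1/2}_{2,1}$ in $d=3$, which are then fed into Lemma \ref{hr refined} as the $f_2$ forcing; note also that the paper places $E$ in $L^2_T$ (using the damping) rather than in $\tilde L^\infty_T$. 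Similarly, estimating the convection term only in $L^2_T\dot H^{d/2-2}$ (an $\ell^2$-based space) is not enough to recover $v\in L^2_TL^\infty$, since Lemma \ref{hr refined} requires either $f_1\in L^1_T\dot H^{d/2-1}$ or $f_2\in\tilde L^2_T\dot B^{d/2-2}_{2,1}$ with $\ell^1$ summability; the paper instead uses \eqref{est1 2D}--\eqref{est1 3D}, putting both velocity factors in $L^2_T(L^\infty\cap\dot H^{d/2})$ and the output in $L^1_T\dot H^{d/2-1}$. Finally, the logarithm in $d=2$ does not come from the $L^2_TL^\infty$ heat estimate (which is log-free in both dimensions) but from the high-frequency remainder in the paraproduct estimate \eqref{est4 2D} for $E\times B$.
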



In dimension two, the extra logarithmic regularity is needed to 
estimate the term $E\times B$ appearing in the Navier-Stokes equations.

In dimension three, the control of $B$ in $L^2(0,T;    \dot{H}^{\frac32,\frac12})$ is not needed to close the fixed point estimate, but we added it for completeness.

The rest of this paper is organized as follows. In the next section we define some further tools needed in the proof.
In Section 3, we detail the linear (parabolic regularity) and nonlinear (product law) estimates needed in the proof of the main theorem. 
The main theorem is then proved in Section 4. Finally, the proofs of some technical estimates are given in the appendix.
\begin{center}
{Acknowledgement}
\end{center}
S. Ibrahim thanks all the members of the ``UFR de Math\'ematiques" at \'Ecole Normale Sup\'erieure de Paris for their great hospitality to accomplish this work during his visit there. 

\section{Notations and functional spaces} Throughout this work we use the following notation.
\begin{enumerate}
\item For any positive  $A$ and $B$  the notation  $A \lesssim  B$ means that there exists a positive constant $C$ such that $A\le CB$. 
\item  $c$ will always denote an absolute constant $0<c<1$.
\item For any tempered distribution $u$,  both $ \hat u$  and $\mathcal F u$ denote the Fourier transform of $u$.
\item For every $p\in [1,\infty]$, $\|\cdot\|_{L^p}$ denotes the norm in the Lebesgue space $L^p$.
\item For any normed space ${\mathcal X}$, the mixed space-time Lebesgue space $L^p([0,T],\mathcal X)$ denotes the space 
of functions $f$  such that for almost all $t \in (0,T)$, 
$f(t)\in\mathcal X$ and   $ \|f(t)  \|_{\mathcal X}  \in L^p(0,T)  $. The notation $L^p([0,T],\mathcal X)$ is often shortened to $L^p_T\mathcal X$.
\end{enumerate}

Let us recall the well-known Littlewood-Paley decomposition and the corresponding  cut-off operators. 
There exists a radial positive  function   $\varphi\in\mathcal{D}(\R^d\backslash{\{0\}})$ such that
\begin{eqnarray*}
\label{lpfond1}
\sum_{q\in\mathbb Z} \varphi (2^{-q}\xi)& =& 1\qquad \forall\, \xi\in\mathbb R^d\setminus\{0\},
\\
\Supp\ \varphi(2^{-q}\cdot)\cap \Supp\ \varphi(2^{-j}\cdot)&=&\emptyset, \qquad \forall\, |q-j|\geq 2.
\end{eqnarray*}
For every  $q\in\mathbb Z$ and $v\in{\mathcal S}'(\R^d)$ we set 
$$
\Delta_qv=\mathcal{F}^{-1} \varphi(2^{-q}\xi) \hat v (\xi) \quad\hbox{ and  }\;
S_q=\sum_{ j=-\infty}^{ q-1}\Delta_{j}.
$$
Bony's decomposition \cite{Bony81} consists in splitting the product  $uv$ into three parts\footnote{ It should be said that this decomposition is true in the class of distributions for which 
$\sum_{q\in\mathbb Z}\Delta_q=I$.
For example, polynomial functions do not belong to this class.}: 
$$
uv=T_u v+T_v u+R(u,v),
$$
with
$$
T_u v=\sum_{q}S_{q-1}u\Delta_q v,\quad  R(u,v)=\sum_{q}\Delta_qu\tilde\Delta_{q}v  \quad\hbox{and}\quad \tilde\Delta_{q}=\sum_{i=-1}^1\Delta_{q+i}.
$$
For $(p,r)\in[1,+\infty]^2$ and $s\in\R$ we define  the homogeneous Besov \mbox{space $\Dot B_{p,r}^s$} 
as the set of  $u\in\mathcal{S}'(\R^d)$ such that $u = \sum_q \Delta_q u$ and
$$
\|u\|_{\dot B^s_{p,r}} =\Bigl\|\left( 2^{qs} \|\Delta
_qu\|_{L^p}\right)_{q\in \Z}\Bigr\|_{\ell^r(\Z)}<\infty.
$$
In the case $p=r=2$, the space $ \dot B^s_{2,2}$ turns out to be the classical homogeneous Sobolev space $\dot H^s$.  Finally, define $\tilde L^q_T \dot{B}^{s}_{p,r}$ is given by distributions $u$ such that
$$
\left\| u \right\|_{L^q_T \dot{B}^{s}_{p,r}} = \Bigl\|\left( 2^{qs} \|\Delta
_qu\|_{L^q_T L^p}\right)_{q\in \Z}\Bigr\|_{\ell^r(\Z)}<\infty.
$$

\section{Linear and Nonlinear estimates}
We will  make an extensive use of Bernstein's inequalities (see  \cite{Chemin95} for instance).
\begin{lemm}[Bernstein's lemma]
\label{lb}
 There exists a constant $C$ such that for any $q,k\in\N,$ $1\leq a\leq b$ and for  $f\in L^a(\R^d)$, 
\begin{eqnarray*}
\sup_{|\alpha|=k}\|\partial ^{\alpha}S_{q}f\|_{L^b}&\leq& C^k\,2^{q(k+d(\frac{1}{a}-\frac{1}{b}))}\|S_{q}f\|_{L^a},\\
\ C^{-k}2^
{qk}\|{\Delta}_{q}f\|_{L^a}&\leq&\sup_{|\alpha|=k}\|\partial ^{\alpha}{\Delta}_{q}f\|_{L^a}\leq C^k2^{qk}\|{\Delta}_{q}f\|_{L^a}.
\end{eqnarray*}
\end{lemm}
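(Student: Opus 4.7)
The plan is to reduce to frequency scale $q=0$ by rescaling, represent each frequency-localized operator as a convolution with a Schwartz kernel, and apply Young's convolution inequality. If $g$ has Fourier support in the scale-$2^q$ ball (the case $g=S_qf$) or annulus (the case $g=\Delta_qf$), the rescaled function $g_0(x):=g(2^{-q}x)$ has Fourier support in the corresponding scale-$0$ region, and the identities $\|g_0\|_{L^p}=2^{qd/p}\|g\|_{L^p}$ and $\|\partial^\alpha g_0\|_{L^p}=2^{qd/p-qk}\|\partial^\alpha g\|_{L^p}$ automatically produce the claimed powers of $2^q$, so it suffices to prove each inequality at $q=0$.

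For the two upper bounds, I would fix once and for all a bump $\tilde\chi\in\mathcal{D}(\R^d)$ identically $1$ on the scale-$0$ support set in question (a ball for $S_0$, an annulus for $\Delta_0$). Then $g_0=\mathcal{F}^{-1}[\tilde\chi]\ast g_0$, hence $\partial^\alpha g_0=\mathcal{F}^{-1}[(i\xi)^\alpha\tilde\chi]\ast g_0$, and Young's inequality with $1/r=1+1/b-1/a\in[0,1]$ yields
\begin{equation*}
\|\partial^\alpha g_0\|_{L^b}\leq\bigl\|\mathcal{F}^{-1}[(i\xi)^\alpha\tilde\chi]\bigr\|_{L^r}\,\|g_0\|_{L^a}.
\end{equation*}
Writing $(1+|x|^2)^N\,\mathcal{F}^{-1}[(i\xi)^\alpha\tilde\chi](x)=\mathcal{F}^{-1}\bigl[(1-\Delta_\xi)^N\bigl((i\xi)^\alpha\tilde\chi\bigr)\bigr](x)$ with $N>d$ bounds the kernel's $L^r$ norm by $C^k$: on the fixed support of $\tilde\chi$ the polynomial $\xi^\alpha$ has sup-norm $\leq C_0^k$, and each $\xi$-derivative only lowers the monomial's degree.

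The reverse inequality $C^{-k}\|\Delta_0f\|_{L^a}\leq\sup_{|\alpha|=k}\|\partial^\alpha\Delta_0f\|_{L^a}$ rests on the algebraic identity $|\xi|^{2k}=\sum_{|\alpha|=k}\binom{k}{\alpha}\xi^{2\alpha}$. Taking $\eta\in\mathcal{D}(\R^d)$ a bump supported in an annulus and identically $1$ on the Fourier support of $\Delta_0$, one can harmlessly divide by $|\xi|^{2k}$ on that support and write
\begin{equation*}
\widehat{\Delta_0f}(\xi)=\sum_{|\alpha|=k}(-i)^{k}\binom{k}{\alpha}\,\frac{\xi^\alpha\eta(\xi)}{|\xi|^{2k}}\,\widehat{\partial^\alpha\Delta_0f}(\xi).
\end{equation*}
Since $\eta$ vanishes near $0$, each symbol $\xi^\alpha\eta(\xi)/|\xi|^{2k}$ is smooth and compactly supported, and the same integration-by-parts argument gives its inverse Fourier transform $K_\alpha$ with $\|K_\alpha\|_{L^1}\leq C^k$. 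Young's inequality applied to $\Delta_0f=\sum_\alpha c_\alpha K_\alpha\ast\partial^\alpha\Delta_0f$ then yields the bound, the $2^k$ produced by the binomial sum being absorbed into $C^k$.

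The main obstacle I foresee is the quantitative bookkeeping: getting these inequalities with an arbitrary $k$-dependent constant is essentially trivial, but the stated geometric-in-$k$ growth requires the quantitative kernel estimates sketched above, and in particular a choice of $\tilde\chi$ and $\eta$ that is independent of $k$ so that only the polynomial factors $\xi^\alpha$ (and not the cutoffs themselves) contribute to the $C^k$ growth.
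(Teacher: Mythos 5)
Your proof is correct and is essentially the standard argument for Bernstein's lemma (rescaling to frequency scale $0$, convolution with a fixed Schwartz kernel plus Young's inequality, and the multinomial identity $|\xi|^{2k}=\sum_{|\alpha|=k}\binom{k}{\alpha}\xi^{2\alpha}$ for the reverse bound); the paper gives no proof of its own and simply cites Chemin's book, where the same argument appears. The only point worth tightening is the remark that ``each $\xi$-derivative only lowers the monomial's degree'': differentiating $\xi^\alpha$ also produces combinatorial factors of size up to $k^{2N}$, but since $k^{2N}\leq C_N^{\,k}$ these are harmlessly absorbed into $C^k$, so the geometric-in-$k$ constant survives as claimed.
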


The parabolic regularity result we will need reads
\begin{lemm}[Parabolic regularization, see for example \cite{BCD}]
\label{hr}
Let $u$ be a smooth divergence free vector field solving
\begin{equation*}
   \label{u-f} 
   \left\{ \begin{array}{l} \partial_t u - \Delta
u + \nabla p
 = f \\
 u_{|t=0} = u^0, 
\end{array} \right. 
\end{equation*}
on some time interval $[0,T]$. Then, for  every $p\geq r \geq 1 $ and $s \in \R$ and $j\geq1$,
 \begin{equation*}
 \| u  \|_{C([0,T];\dot B^{s}_{q,j}  ) \cap \tilde L^p_T
\dot B^{s+\frac2p}_{q,j} }   \lesssim    \| u^0 \|_{\dot B^{s}_{q,j}} + \| f  \|_{\tilde L^r_T  \dot B^{s-2+\frac2r}_{q,j}}.
\end{equation*}
\end{lemm}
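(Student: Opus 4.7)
The plan is a standard Duhamel-plus-Littlewood--Paley argument exploiting the exponential decay of the heat semigroup on spectrally localized functions.

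First, since $u$ is divergence-free and $\mathcal{P}\nabla p = 0$, applying the Leray projection to the equation gives $\partial_t u - \Delta u = \mathcal{P} f$. Fixing a Littlewood--Paley block $\Delta_k$ (writing $k$ for the frequency index, to avoid the clash with the Lebesgue exponent $q$ appearing in the statement) and using Duhamel yields
$$
\Delta_k u(t) \;=\; e^{t\Delta}\Delta_k u^0 \;+\; \int_0^t e^{(t-\tau)\Delta}\,\Delta_k \mathcal{P} f(\tau)\,d\tau.
$$
Because $\Delta_k \mathcal{P}$ is a Fourier multiplier supported in an annulus of scale $2^k$, it is bounded on every $L^q$ uniformly in $k$, so the projector can effectively be dropped in what follows.

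The hard part is the Bernstein-type heat-kernel bound
$$
\|e^{t\Delta}\Delta_k g\|_{L^q} \;\lesssim\; e^{-c\,t\,2^{2k}}\,\|\Delta_k g\|_{L^q},
$$
which supplies the crucial exponential gain at scale $2^{2k}$. One obtains this by realising $e^{t\Delta}\Delta_k$ as convolution against the kernel $\mathcal{F}^{-1}\!\big(e^{-t|\xi|^2}\tilde\varphi(2^{-k}\xi)\big)$ and checking, by a scaling argument (change variables $\xi = 2^k\eta$), that this kernel has $L^1$ norm bounded by $e^{-ct\,2^{2k}}$; Young's inequality then closes the pointwise-in-$t$ estimate. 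This is essentially the only genuine technical step.

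Granted this, the rest is mechanical. For the homogeneous piece, $\|e^{t\Delta}\Delta_k u^0\|_{L^p_T L^q} \lesssim \|e^{-ct 2^{2k}}\|_{L^p(0,T)}\|\Delta_k u^0\|_{L^q} \lesssim 2^{-2k/p}\|\Delta_k u^0\|_{L^q}$. For the Duhamel piece, Young's convolution inequality in time with exponents satisfying $1+1/p = 1/\alpha + 1/r$ (which requires $\alpha \geq 1$, whence the hypothesis $p \geq r$) combined with $\|e^{-ct 2^{2k}}\|_{L^\alpha} \lesssim 2^{-2k/\alpha}$ gives
$$
\|\Delta_k u_{\mathrm{Duh}}\|_{L^p_T L^q} \;\lesssim\; 2^{-2k(1+1/p-1/r)}\,\|\Delta_k f\|_{L^r_T L^q}.
$$
Multiplying by $2^{k(s+2/p)}$ produces $2^{k(s-2+2/r)}\|\Delta_k f\|_{L^r_T L^q}$ on the right, and taking the $\ell^j$ norm over $k$ (legitimate because the $\tilde L$ construction places the summability outside the time norm) delivers the claimed $\tilde L^p_T\dot B^{s+2/p}_{q,j}$ bound. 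The $\mathcal{C}([0,T];\dot B^s_{q,j})$ estimate is the $p = \infty$ case of the same argument, with continuity frequency-by-frequency visible directly from the integral representation and transferred to the $\ell^j$ sum by dominated convergence.
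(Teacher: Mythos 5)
Your argument is correct and is precisely the standard smoothing estimate for the heat semigroup in Chemin--Lerner spaces that the paper invokes without proof by citing \cite{BCD}: Leray projection to remove the pressure, Duhamel, the localized bound $\|e^{t\Delta}\Delta_k g\|_{L^q}\lesssim e^{-ct2^{2k}}\|\Delta_k g\|_{L^q}$, and Young's inequality in time (the condition $p\geq r$ entering exactly as you say). The only cosmetic gloss is that the kernel bound needs, beyond rescaling to unit frequency, the usual integration-by-parts (weighted $L^1$) argument to extract the exponential decay, and the continuity-in-time statement via dominated convergence requires $j<\infty$; neither affects the substance.
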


The following result is an $L^2_TL^\infty$ estimate which was originally proved 
in \cite{le}, \cite{CG} in dimension two.

\begin{lemm} [$L^2L^\infty$ estimate]
\label{hr refined}
Let $d=2,3$ and $u$ be a smooth divergence free vector field solving
\begin{equation*}
   \left\{ \begin{array}{l} \partial_t u - \Delta
u + \nabla p
 = f_1+f_2\\
 u_{|t=0} = u^0, 
\end{array} \right. 
\end{equation*}
on some time interval $[0,T]$. Assume that 
$f_1\in L^1_T \dot H^{\frac d2-1}$ and $f_2\in \tilde L^2_T \dot B^{\frac d2-2}_{2,1}$. Then,
 \begin{equation}
\label{L2Linfty}
 \|u\|_{L^2_TL^\infty }   \lesssim    \| u_0 \|_{\dot H^{\frac d2-1}} + 
\|f_1\|_{ L^1_T \dot H^{\frac d2-1}}+\|f_2\|_{\tilde L^2_T \dot B^{\frac d2-2}_{2,1}}.
\end{equation}
\end{lemm}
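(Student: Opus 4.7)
My plan is to exploit linearity and decompose $u = U_0 + U_1 + U_2$ according to the three sources — $U_0 = e^{t\Delta}\mathcal{P}u_0$ and $U_1, U_2$ the Duhamel terms for $f_1, f_2$ — and control each by a strategy matched to the summability of its driving norm. For $U_2$, the hypothesis $f_2 \in \tilde L^2_T \dot B^{d/2-2}_{2,1}$ already has $\ell^1$-summable dyadic blocks, so Lemma~\ref{hr} applies directly (with $r = p = 2$, $j = 1$, $s = d/2 - 2$) to yield $U_2 \in \tilde L^2_T \dot B^{d/2}_{2,1}$; Minkowski's inequality then dominates the $L^2_T\dot B^{d/2}_{2,1}$ norm, which embeds into $L^2_T L^\infty$ by summed Bernstein. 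This closes the $f_2$ contribution immediately.

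The core of the proof is the free-evolution piece $U_0$. The essential difficulty is that $\dot H^{d/2-1}$ carries only $\ell^2$-summability, one less than a naive Bernstein would demand. I would start from the blockwise bound
\[
\|\Delta_q U_0(t)\|_{L^\infty} \lesssim 2^{qd/2}\, e^{-ct 2^{2q}}\, \|\Delta_q u_0\|_{L^2}
\]
(Bernstein combined with the heat decay on $\Delta_q e^{t\Delta}$). Crucially, instead of pulling the sum $\sum_q$ outside $L^2_t$ by triangle inequality, I would \emph{first square} the sum and \emph{then integrate} in $t$. Fubini produces the resolvent-type kernel
\[
\int_0^\infty e^{-ct(2^{2q_1}+2^{2q_2})}\, dt \;\lesssim\; \frac{1}{2^{2q_1}+2^{2q_2}} \;\lesssim\; 2^{-2\max(q_1,q_2)},
\]
which is exactly sharp enough to absorb the Bernstein growth $2^{(q_1+q_2)d/2}$. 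After dividing out the $\dot H^{d/2-1}$ weights $a_q := 2^{q(d/2-1)}\|\Delta_q u_0\|_{L^2}$, a direct calculation shows the remaining coefficient simplifies (in both $d=2$ and $d=3$) to $2^{-|q_1-q_2|}$. A geometric resummation over $r = |q_1-q_2|$ combined with Cauchy-Schwarz in the other index then yields $\lesssim \|u_0\|_{\dot H^{d/2-1}}^2$, uniformly in $T$.

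For the $f_1$ piece $U_1$, I would reduce to the $U_0$ estimate via Minkowski in the Duhamel integral: writing $U_1(t) = \int_0^t e^{(t-s)\Delta}\mathcal{P}f_1(s)\,ds$ and using Minkowski in the $s$-variable gives
\[
\|U_1\|_{L^2_T L^\infty} \leq \int_0^T \|e^{(\cdot - s)_+\Delta}\mathcal{P}f_1(s)\|_{L^2_T L^\infty}\,ds \lesssim \int_0^T \|f_1(s)\|_{\dot H^{d/2-1}}\,ds = \|f_1\|_{L^1_T \dot H^{d/2-1}},
\]
the inner inequality being the $U_0$ bound applied with datum $f_1(s)$ at time $s$.

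The main obstacle is the matching of summabilities in the $U_0$ step. A triangle inequality that pulls $\sum_q$ outside $L^2_t$ forces control in $\dot B^{d/2-1}_{2,1}$, strictly stronger than the hypothesis $\dot H^{d/2-1}$, and is simply unavailable. The remedy — squaring first and exploiting the exact $L^1_t$-integral of the heat decay to generate the kernel $(2^{2q_1}+2^{2q_2})^{-1}$ — is the key observation behind \cite{le,CG}; it is also pleasantly dimension-independent, producing a unified argument in $d=2$ and $d=3$.
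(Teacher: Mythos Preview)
Your proof is correct and structurally identical to the paper's: the same decomposition by source, the same treatment of the $f_2$ piece via Lemma~\ref{hr} together with the embeddings $\tilde L^2_T \dot B^{d/2}_{2,1} \hookrightarrow L^2_T \dot B^{d/2}_{2,1} \hookrightarrow L^2_T L^\infty$, and the same Minkowski reduction of the $f_1$ piece to the free-evolution estimate.

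The only difference is in how the free-evolution bound $\|e^{t\Delta}u_0\|_{L^2_TL^\infty}\lesssim\|u_0\|_{\dot H^{d/2-1}}$ is obtained. The paper simply invokes the Sobolev--Besov embedding $\dot H^{d/2-1}\hookrightarrow \dot B^{-1}_{\infty,2}$ together with the heat-semigroup characterization $\|v\|_{\dot B^{-1}_{\infty,2}}\sim \bigl\|\,\|e^{t\Delta}v\|_{L^\infty}\bigr\|_{L^2(0,\infty)}$ quoted from~\cite{BCD}. Your squared-sum/Fubini/resolvent-kernel computation is precisely a direct proof of the relevant direction of that characterization: the identity $\int_0^\infty e^{-ct(2^{2q_1}+2^{2q_2})}\,dt\sim(2^{2q_1}+2^{2q_2})^{-1}$ and the resulting $2^{-|q_1-q_2|}$ Schur kernel are exactly what underlies the equivalence. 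So your route is not genuinely different, just more self-contained; it trades a one-line citation for an explicit (and dimension-independent) calculation.
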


\begin{proof}
Thanks to Lemma \ref{hr}, and using the embeddings
$$
\tilde L^2 \dot{B}^{d/2}_{2,1} \hookrightarrow L^2 \dot{B}^{d/2}_{2,1} \hookrightarrow L^2 L^\infty,
$$
we can assume that $f_2=0$. Duhamel's formula gives
$$
u(t)=e^{t\Delta}u_0+\int_0^te^{(t-t')\Delta}{\mathcal P}f_1(t')\;dt',
$$
and thus 
\begin{equation}
\|u(t)\|_{L^2_TL^\infty} \leq \|e^{t\Delta} u_0\|_{L^2_TL^\infty}
+\int_0^T\|e^{t \Delta}{\mathcal P}f_1(t')\|_{L^2_T L^\infty}\;dt'.
\end{equation}
Using the embedding $\dot H^{\frac d2-1}\hookrightarrow \dot{\mathcal B}^{-1}_{\infty,2}$ 
and the characterization of Besov spaces of negative regularity (see for example \cite{BCD}),
$$
\|u\|_{\dot{\mathcal B}^{-1}_{\infty,2}}\sim 
\|\|e^{t\Delta}u\|_{L^\infty}\|_{L^2(0,\infty)},
$$
thus we obtain \eqref{L2Linfty} as desired. 
\end{proof}

Now we focus on Maxwell's equations.  The first result is an energy type estimate.

\begin{lemm}
\label{pr1}
Let $\alpha\geq0$, $G_1\in L^2_T\dot H^{\frac d2-1}_\alpha$, 
and $(E,B)$ be a smooth solution of 
\begin{eqnarray*}
\label{eq2B}
\partial_tE-\curl\; B+E&=&G\\
\partial_tB+\curl\; E&=&0\\
(E,B)_{|t=0} &=& (E_0,B_0),
\end{eqnarray*}
on some time interval $[0,T]$. 
Then, the following estimate holds (with constants independent of $T$)
\begin{equation}
\label{energy-bis}
\|E\|_{{\tilde L}^\infty_T \dot H^{\frac d2-1}_\alpha\cap 
L^2_T \dot H^{\frac d2-1}_\alpha}+
\|B\|_{{\tilde L}^\infty_T  \dot H^{\frac d2-1}_\alpha}
 \lesssim  \|(E_0,B_0)\|_{\dot H^{\frac d2-1}_\alpha}+
\|G\|_{L^2_T \dot H^{\frac d2-1}_\alpha}.
\end{equation}
Moreover, $B$ satisfies the following decay estimate
\begin{eqnarray}
 \label{B decay}
\|B\|_{L^2\dot H^{\frac d2,\frac d2-1}_\alpha}
 \lesssim  \|(E_0,B_0)\|_{\dot H^{\frac d2-1}_\alpha}+
\|G\|_{L^2_T \dot H^{\frac d2-1}_\alpha}.
\end{eqnarray}
\end{lemm}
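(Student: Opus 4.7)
The plan is to frequency-localize both equations via $\Delta_q$ and run scalar energy estimates on each block: the coupling in the Maxwell subsystem is skew (the $\curl$ operator is symmetric in the $L^2$ pairing), so block energies satisfy clean ODE-type identities. Throughout I write $E_q:=\Delta_qE$, $B_q:=\Delta_qB$, $G_q:=\Delta_qG$, and $I_q:=\|E_q(0)\|_{L^2}^2+\|B_q(0)\|_{L^2}^2+\|G_q\|_{L^2_TL^2}^2$. For the basic estimate \eqref{energy-bis} I test the $E$-equation against $E_q$ and the $B$-equation against $B_q$ and add; the two $\curl$ couplings cancel and what remains is
\[
\frac12\frac{d}{dt}\bigl(\|E_q\|_{L^2}^2+\|B_q\|_{L^2}^2\bigr)+\|E_q\|_{L^2}^2=\langle G_q,E_q\rangle.
\]
Integrating in time and applying Young's inequality gives the block estimate $\|E_q\|_{L^\infty_TL^2}^2+\|B_q\|_{L^\infty_TL^2}^2+\|E_q\|_{L^2_TL^2}^2\lesssim I_q$; multiplying by $2^{2q(d/2-1)}$ for $q\leq 0$ (resp.\ $q^\alpha 2^{2q(d/2-1)}$ for $q>0$) and taking sup/sum in $q$ yields \eqref{energy-bis}.

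For \eqref{B decay} I would differentiate the second equation in $t$ and substitute from the first; using that $\div B=0$ is preserved by the flow (so $\curl\curl B=-\Delta B$) together with $\curl E=-\partial_tB$, this turns $B$ into a solution of the damped wave equation
\[
\partial_t^2B+\partial_tB-\Delta B=-\curl G.
\]
Localizing this identity and testing against $B_q$ on $[0,T]$, integration by parts produces $\int_0^T\|\nabla B_q\|_{L^2}^2\,dt$ on the left, to be balanced against: an $\int_0^T\|\partial_tB_q\|_{L^2}^2\,dt$ term, which via $\partial_tB_q=-\curl E_q$ and Bernstein is controlled by $2^{2q}\|E_q\|_{L^2_TL^2}^2\lesssim 2^{2q}I_q$; boundary terms $\langle\partial_tB_q,B_q\rangle$ at $t=0,T$, bounded using the $\tilde L^\infty_TL^2$ bounds from the first step and Young's inequality by $(1+2^{2q})I_q$; and a source contribution $\int_0^T\langle\curl B_q,G_q\rangle$, which Cauchy--Schwarz, Bernstein and Young turn into $\epsilon\|\nabla B_q\|_{L^2_TL^2}^2+C_\epsilon\|G_q\|_{L^2_TL^2}^2$, whose first part is absorbed on the left. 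Together these give the block estimate $\|\nabla B_q\|_{L^2_TL^2}^2\lesssim(1+2^{2q})\,I_q$.

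Summing block-wise with the $\dot H^{d/2,d/2-1}_\alpha$ weights then closes the proof. For $q\leq 0$ one reads off $\|B_q\|_{L^2_TL^2}^2\lesssim 2^{-2q}I_q$, so the low-frequency weight $2^{qd}$ reproduces the target $2^{2q(d/2-1)}I_q$; for $q>0$ one reads off $\|B_q\|_{L^2_TL^2}^2\lesssim I_q$, which already matches the high-frequency weight $q^\alpha 2^{2q(d/2-1)}$. I expect the main subtlety to be the bookkeeping of the boundary term $\langle\partial_tB_q,B_q\rangle$ and the simultaneous absorption of the source term on the left; both are handled cleanly precisely because the first step has already furnished $\tilde L^\infty_TL^2$ control of $E_q$ and $B_q$, so no circularity arises.
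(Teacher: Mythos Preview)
Your treatment of \eqref{energy-bis} is exactly what the paper does: frequency-localize, exploit the skew structure so the $\curl$ terms cancel, and sum the resulting block energy inequalities with the appropriate weights.

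For the decay estimate \eqref{B decay} you take a genuinely different route. The paper also starts from the damped wave equation $\partial_t^2 B - \Delta B + \partial_t B = \pm\curl G$, but then writes the solution explicitly via the Duhamel formula with the Fourier multipliers
\[
\Phi_1(t,\xi)=e^{-t/2}\cosh\bigl(t\sqrt{1/4-|\xi|^2}\bigr),\qquad
\Phi_2(t,\xi)=e^{-t/2}\,\frac{\sinh\bigl(t\sqrt{1/4-|\xi|^2}\bigr)}{\sqrt{1/4-|\xi|^2}},
\]
and derives block-wise $L^2_T L^2$ bounds from pointwise decay of $\Phi_1,\Phi_2$ in the three regimes $|\xi|\ge 2$, $1/4\le|\xi|<2$, $|\xi|\le 1/4$ (in particular $\|\Phi^i_q\|_{L^r(\mathbb R^+)}\lesssim 2^{-2q/r}$ for $q\le 0$). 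Your argument instead tests the localized damped wave equation against $B_q$, integrates by parts in time, and feeds in the $\tilde L^\infty_T L^2$ and $L^2_T L^2$ control of $E_q,B_q$ already obtained in the first step to handle the boundary terms and the $\int_0^T\|\partial_t B_q\|_{L^2}^2\,dt$ contribution. The resulting block bound $\|\nabla B_q\|_{L^2_TL^2}^2\lesssim(1+2^{2q})I_q$ is equivalent to what the paper extracts from the multiplier analysis, and your summation step is the same. Your approach is more elementary and avoids computing the propagator; the paper's approach, on the other hand, yields sharper pointwise-in-time decay of each block (exponential damping for high frequencies), which is extra information not needed here but sometimes useful.
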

We emphasize that for the existence and uniqueness part of Theorem~\ref{MAIN}, in dimension 3,
estimate \eqref{B decay}is irrelevant.
\begin{proof}  Only the estimate of 
$\|B\|_{L^2\dot H^{\frac d2,\frac d2-1}_\alpha}$
 requires a proof, which is given in the Appendix. All other estimates can be derived by a 
standard energy estimate; apply $\Delta_q$ to the system, derive an energy inequality, 
multiply both members of that inequality  by 
$2^{q\left( \frac{d}{2} - 1 \right)} {\sqrt{\max(1,q^\alpha)}}$ and take the $\ell^2(\mathbb Z)$-norm.
\end{proof}
The following is a series of nonlinear estimates needed for the contraction argument. 
\begin{prop} 
\label{lem-prod} 
For all smooth functions  $u$, $E$ and $B$ defined on some 
interval $[0,T]$, we have the following estimates, with constants independent of $T$: in space dimension 2,
\begin{eqnarray}
\label{est1 2D} 
\|\nabla(u\otimes v)\|_{L^1_TL^2(\R^2)}  \lesssim  
\|u \|_{L^2_T(L^\infty(\R^2)\cap \dot H^1(\R^2)) }
\|v \|_{L^2_T(L^\infty(\R^2)\cap \dot H^1(\R^2))}
\end{eqnarray}
\begin{eqnarray}
\label{est4 2D} 
\|E\times B \|_{\tilde L^2_T\dot {\mathcal B}^{-1}_{2,1}(\R^2) + L^1_T L^2(\mathbb{R}^2)} 
\lesssim   \|E\|_{{ L}^2_T L^2_{\rm log}(\R^2)} 
\|B\|_{{\tilde L}^\infty_T L^2_{\rm log}(\R^2) \cap L^2_T \dot H^{1,0}}
\end{eqnarray}
\begin{eqnarray}
\label{est3 uB d=2}
\|u\times B \|_{L^2_TL^2_{\rm log}(\R^2)}\lesssim
\|u\|_{L^2 (L^\infty(\R^2) \cap\dot H^1(\R^2) )}
\|B\|_{{\tilde L}^\infty_TL^2_{\rm log}(\R^2)},
\end{eqnarray}
and in space dimension 3,
\begin{eqnarray}
\label{est1 3D} 
\|\nabla(u\otimes v)\|_{L^1_T\dot H^{\frac12}(\R^3)}  \lesssim  
\|u \|_{L^2_T(L^\infty(\R^3)\cap \dot H^\frac32(\R^3))}
\|v \|_{L^2_T(L^\infty(\R^3)\cap \dot H^\frac32(\R^3))}
\end{eqnarray}
\begin{eqnarray}
\label{est4 3D} 
\|E\times B \|_{\tilde L^2_T\dot {\mathcal B}^{-\frac12}_{2,1}(\R^3)} 
\lesssim   \|E\|_{{L}^2_T \dot H^{\frac12}(\R^3)} 
\|B\|_{{\tilde L}^\infty_T \dot H^{\frac12}(\R^3)}
\end{eqnarray}
\begin{eqnarray}
\label{est3 uB d=3} 
\|u\times B\|_{{L}^2_T\dot H^{\frac12}(\R^3)}\lesssim
\|u\|_{L^2_T(L^\infty(\R^3)\cap \dot H^\frac32(\R^3))} 
\|B\|_{{\tilde L}^\infty_T\dot H^\frac12(\R^3)}.
\end{eqnarray}
\end{prop}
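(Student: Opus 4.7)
The six estimates share a common template: apply Bony's paraproduct decomposition $fg=T_fg+T_gf+R(f,g)$, bound each dyadic block via Bernstein's inequality, and close with Hölder in time. The three-dimensional estimates reduce to classical fractional product laws; the two-dimensional ones, and especially \eqref{est4 2D}, require extra care to absorb the logarithmic weight of $L^2_{\rm log}$.

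For \eqref{est1 3D}, I would invoke $\nabla(u\otimes v)=(\nabla u)v+u\nabla v$ together with the Kato--Ponce-type product law
$$
\|fg\|_{\dot H^{1/2}(\R^3)}\lesssim\|f\|_{L^\infty}\|g\|_{\dot H^{1/2}}+\|g\|_{L^\infty}\|f\|_{\dot H^{1/2}},
$$
which is immediate from Bony's decomposition. Hölder in time with $L^1=L^2\cdot L^2$ finishes. The same template, with $u$ and $B$ in the roles of $f$ and $g$, yields \eqref{est3 uB d=3}. For \eqref{est4 3D}, Bony in $\R^3$ together with the Bernstein bound $\|S_{j-1}E\|_{L^\infty}\lesssim 2^j\|E\|_{\dot H^{1/2}}$, a Cauchy--Schwarz in time, and a discrete Young convolution over dyadic frequencies produces
$$
\sum_{j}2^{-j/2}\|\Delta_j(E\times B)\|_{L^2_TL^2}\lesssim\|E\|_{L^2_T\dot H^{1/2}}\|B\|_{\tilde L^\infty_T\dot H^{1/2}},
$$
which is exactly the $\tilde L^2_T\dot B^{-1/2}_{2,1}$ bound sought.

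In 2D, \eqref{est1 2D} follows at once from the Leibniz rule and Hölder. For \eqref{est3 uB d=2} I would again use Bony: the low output frequencies are controlled by $\|u\|_{L^\infty}\|B\|_{L^2}$, while for output frequencies $j>0$ the weight $\sqrt{j}$ is supplied by $B\in L^2_{\rm log}$, and two-dimensional Bernstein ($\|\Delta_ku\|_{L^\infty}\lesssim 2^k\|\Delta_ku\|_{L^2}$) lets the $\dot H^1$-norm of $u$ absorb the remaining high-frequency cost.

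The real obstacle is \eqref{est4 2D}: both factors sit only in $L^2_{\rm log}$, a borderline situation in two dimensions. My plan is to split $B=B_L+B_H$ at frequency zero and treat the two halves separately. The piece $E\times B_L$ is placed in $\tilde L^2_T\dot B^{-1}_{2,1}$ via Bony and Bernstein, where the logarithmic weight on $E$ is precisely what makes the sum over high frequencies of $E$ absolutely convergent. The piece $E\times B_H$ exploits the gain $B_H\in L^2_T\dot H^1$; a dyadic bound of $E\times B_H$ in $L^1_TL^2$ is then closed by Cauchy--Schwarz against the weight on $E$, via an inequality of the shape $\sum_{k>0}\alpha_k\beta_k\le(\sum k^{-1}\alpha_k^2)^{1/2}(\sum k\,\beta_k^2)^{1/2}$ with $\alpha_k$ encoding $\Delta_kE$ and $\beta_k$ encoding $\Delta_kB_H$. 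Because of this delicate frequency bookkeeping, I expect to defer the detailed proof of \eqref{est4 2D} to the appendix and keep the main text focused on the paraproduct/Bernstein recipe.
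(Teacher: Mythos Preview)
Your overall template (Bony decomposition, Bernstein, H\"older in time) matches the paper, and the paper likewise treats \eqref{est1 2D}, \eqref{est1 3D}, \eqref{est4 3D}, \eqref{est3 uB d=3} as ``easier or classical,'' detailing only \eqref{est4 2D} and \eqref{est3 uB d=2}. Your sketch for \eqref{est3 uB d=2} is in the right spirit; the paper makes precise the key cancellation via the bound $\|S_qB\|_{L^\infty_tL^\infty_x}\lesssim 2^q(\max(1,q))^{-1/2}\|B\|_{\tilde L^\infty L^2_{\rm log}}$, so that the log weight in the output norm is exactly absorbed and only $\|u\|_{L^2\dot H^1}$ remains.

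For \eqref{est4 2D}, however, there is a genuine gap. You have misread the space $\dot H^{1,0}$: by the paper's convention, $\dot H^{s,t}$ behaves like $\dot H^s$ at \emph{low} frequencies and $\dot H^t$ at \emph{high} frequencies, so $B\in L^2_T\dot H^{1,0}$ gives the $\dot H^1$ gain on $B_L$, not on $B_H$. Your claim that ``$E\times B_H$ exploits the gain $B_H\in L^2_T\dot H^1$'' is therefore false, and with it the weighted Cauchy--Schwarz you outline does not close. Swapping the roles of $B_L$ and $B_H$ does not obviously repair the argument either, since $\dot H^1(\R^2)$ does not embed in $L^\infty$.

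The paper's decomposition is different in kind: it does \emph{not} split the input $B$ but instead writes
\[
EB=T_EB+T_BE+S_2R(E,B)+(I-S_2)R(E,B),
\]
splitting the remainder by \emph{output} frequency. The two paraproducts land in $\tilde L^2_T\dot B^{-1}_{2,1}$ using only $\|E\|_{L^2_TL^2}\|B\|_{\tilde L^\infty_TL^2}$ (via a discrete Young convolution after Bernstein); no logarithm is needed here, contrary to what you suggest. The low-output remainder $S_2R(E,B)$ is the piece placed in $L^1_TL^2$: after Bernstein ($L^1\to L^2$) and swapping sums one obtains $\sum_{j\le 0}2^j\|\Delta_jB\|_{L^2_TL^2}\|\Delta_jE\|_{L^2_TL^2}+\sum_{j>0}\|\Delta_jB\|_{L^2_TL^2}\|\Delta_jE\|_{L^2_TL^2}$, and it is precisely the $\dot H^{1,0}$ structure of $B$ ($\dot H^1$ at low $j$, $L^2$ at high $j$) that closes both sums. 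The logarithmic weights on \emph{both} $E$ and $B$ are used only for the high-output remainder $(I-S_2)R(E,B)$, where the double sum $\sum_{q\ge 0}\sum_{j\ge q-2}\|\Delta_jE\|\|\Delta_jB\|$ produces a factor $\max(j,1)$ that is split by Cauchy--Schwarz into $\sqrt{j}\cdot\sqrt{j}$.
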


Estimates~\eqref{est1 2D} and~\eqref{est1 3D} enable us to control the advection term in the 
Navier-Stokes component of the system in dimension two and three, respectively. Estimates 
\eqref{est4 2D} and \eqref{est4 3D} are needed to control the Maxwell part 
in the Navier-Stokes component. To estimate the term $(u\times B)\times B$, we use 
\eqref{est4 2D}, \eqref{est3 uB d=2} in two space dimensions and 
\eqref{est4 3D}, \eqref{est3 uB d=3} in three space dimensions. 

\begin{rema} Ignoring the time variable, estimate \eqref{est4 3D} is a particular case of the product law
$$
H^{s_1}(\R^d){\boldsymbol \cdot} H^{s_2}(\R^d) \hookrightarrow \Dot B^{s_1+s_2-\frac{d}2}_{2,1}(\R^d)
$$
with  $s_1,s_2\in ]-\frac{d}2,\frac{d}2[$ and $s_1+s_2>0$. Indeed, it 
corresponds to the admissible choice $s_1=s_2=\frac12$. However, this product law becomes critical in two space dimensions.
Estimate \eqref{est4 2D} shows that an extra logarithmic loss is needed in this case.
\end{rema}
We give the proof of the above proposition in the Appendix.  



\section{Proof of Theorem \ref{MAIN}}

\subsection{Small data, global existence}
Let $\alpha=1$ if 
$d=2$ and $\alpha=0$ if $d=3$. Let ${\mathcal Z}$ be the set  of $\Gamma:=(u,E,B)^T$ such that
\begin{equation*}
\begin{split}
& u \in \mathcal{Z}^u := L^2(0,\infty,\dot H^\frac d2\cap L^\infty)\cap\tilde L^\infty(0,\infty,\dot H^{\frac d2-1}) \\
& E\in \mathcal{Z}^E := ({\tilde L}^\infty\cap L^2) (0,\infty,{\dot H}^{\frac d2-1}_\alpha) \\
& B\in \mathcal{Z}^B := {\tilde L}^\infty(0,\infty,\dot H^{\frac d2-1}_\alpha) \cap{L}^2(0,\infty,{H}^{\frac d2,\frac d2 - 1}_{\alpha}). \end{split}
\end{equation*}
Endow $\mathcal Z$, $\mathcal{Z}^u$, $\mathcal{Z}^E$ and $\mathcal{Z}^B$with the natural norms. Recall that we seek a solution to \eqref{NS} in the integral form
$$
\Gamma(t)=e^{t{\mathcal A}}\Gamma(0)+\int_0^te^{(t-t'){\mathcal A}}{\mathcal N}(\Gamma(t'))\;dt',
$$
with
$$
{\mathcal A}=\left(
\begin{array}{ccc}
\Delta&0&0\\
0&-I&\curl\\0&-\curl &0
\end{array}
\right).
$$
and  ${\mathcal N}(\Gamma)=\big({\mathcal P}(-\nabla(u\otimes u)+
E\times B+(u\times B)\times B), -u\times B,0\big)^T$. 
Let $B_{\delta}$ be the ball of the space ${\mathcal Z}_\infty$ 
centered at  zero and with radius $\delta>0$ to be chosen. 
Define the map $\Phi$ on that ball as follows
\begin{eqnarray}
\nonumber
 \Phi:B_{\delta}\subset{\mathcal Z}&\longrightarrow&{\mathcal Z}\\
 \label{contraction map}
\Gamma&\mapsto& \Phi(\Gamma):=\int_0^te^{(t-t'){\mathcal A}}{\mathcal N}
(e^{t'{\mathcal A}}\Gamma^0+\Gamma(t'))\;dt'.
\end{eqnarray}

\begin{claim}
If $\|\Gamma^0\|_{\dot H^{\frac d2-1}\times\dot H^{\frac d2-1}_\alpha
\times\dot H^{\frac d2-1}_\alpha}\leq \kappa \delta$, with $\delta>0$ and $\kappa>0$
sufficiently small, then the map $\Phi$ is a contraction on $B_{\delta}$. 
\end{claim}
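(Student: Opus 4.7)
The strategy is standard for mild solutions: estimate each component of $\Phi(\Gamma)$ in the corresponding piece of $\mathcal{Z}$ by combining the linear estimates (Lemmas~\ref{hr}, \ref{hr refined}, \ref{pr1}) with the product laws of Proposition~\ref{lem-prod}. Write $\widetilde\Gamma := e^{t\mathcal{A}}\Gamma^0 + \Gamma$ so that $\Phi(\Gamma) = \int_0^t e^{(t-t')\mathcal{A}}\mathcal{N}(\widetilde\Gamma(t'))\,dt'$. The linear semigroup bounds applied to the source-free problem immediately give $\|e^{t\mathcal{A}}\Gamma^0\|_{\mathcal{Z}} \lesssim \|\Gamma^0\|_{\dot H^{d/2-1}\times \dot H^{d/2-1}_\alpha\times \dot H^{d/2-1}_\alpha}$, so if $\kappa$ is small we have $\|\widetilde\Gamma\|_{\mathcal{Z}} \le 2\delta$ for $\Gamma\in B_\delta$.

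Next I apply the linear estimates to $\Phi(\Gamma)$ with source $\mathcal{N}(\widetilde\Gamma)$. For the velocity component, Lemma~\ref{hr} controls $\|\Phi^u\|_{\tilde L^\infty \dot H^{d/2-1}\cap \tilde L^2 \dot H^{d/2}}$ in terms of $\|-\nabla(\widetilde u\otimes \widetilde u)\|_{L^1 \dot H^{d/2-1}}$ (bounded by \eqref{est1 2D}/\eqref{est1 3D}) plus $\|\widetilde E\times\widetilde B+(\widetilde u\times\widetilde B)\times\widetilde B\|_{\tilde L^2 \dot B^{d/2-2}_{2,1}}$; the first of these last two terms is handled by \eqref{est4 2D}/\eqref{est4 3D}, while the cubic term is treated by applying \eqref{est4 2D}/\eqref{est4 3D} with $\widetilde E$ replaced by $\widetilde u\times\widetilde B$ and using \eqref{est3 uB d=2}/\eqref{est3 uB d=3} to put $\widetilde u\times\widetilde B$ into $L^2_T L^2_{\rm log}$ or $L^2_T\dot H^{1/2}$. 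Lemma~\ref{hr refined} gives the missing $L^2_T L^\infty$ control of $\Phi^u$, splitting $\mathcal{N}^u$ exactly along the same $f_1+f_2$ split. For the electromagnetic components, Lemma~\ref{pr1} with $G=-\widetilde u\times\widetilde B$ controls $\Phi^E$ and $\Phi^B$ in $\mathcal{Z}^E\times\mathcal{Z}^B$ via $\|\widetilde u\times\widetilde B\|_{L^2 \dot H^{d/2-1}_\alpha}$, which is exactly \eqref{est3 uB d=2}/\eqref{est3 uB d=3}. Collecting, one obtains a quadratic-plus-cubic bound $\|\Phi(\Gamma)\|_{\mathcal{Z}} \le C(\|\widetilde\Gamma\|_{\mathcal{Z}}^2 + \|\widetilde\Gamma\|_{\mathcal{Z}}^3) \le C(4\delta^2+8\delta^3)$, which is $\le \delta$ provided $\delta$ is small enough; so $\Phi(B_\delta)\subset B_\delta$.

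For the contraction property, $\Phi(\Gamma_1)-\Phi(\Gamma_2)=\int_0^t e^{(t-t')\mathcal{A}}[\mathcal{N}(\widetilde\Gamma_1)-\mathcal{N}(\widetilde\Gamma_2)]dt'$, and the difference $\mathcal{N}(\widetilde\Gamma_1)-\mathcal{N}(\widetilde\Gamma_2)$ expands as a telescoping sum of bilinear and trilinear expressions in which one factor is $\widetilde\Gamma_1-\widetilde\Gamma_2=\Gamma_1-\Gamma_2$ and the remaining factors are $\widetilde\Gamma_1$ or $\widetilde\Gamma_2$. Applying exactly the same linear and product estimates as above to each piece yields
\[
\|\Phi(\Gamma_1)-\Phi(\Gamma_2)\|_{\mathcal{Z}} \le C\bigl(\|\widetilde\Gamma_1\|_{\mathcal{Z}}+\|\widetilde\Gamma_2\|_{\mathcal{Z}}+\|\widetilde\Gamma_1\|_{\mathcal{Z}}^2+\|\widetilde\Gamma_2\|_{\mathcal{Z}}^2\bigr)\|\Gamma_1-\Gamma_2\|_{\mathcal{Z}},
\]
and the prefactor is $\le 1/2$ once $\delta$ is small, completing the Banach fixed point argument.

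\textbf{Main obstacle.} The real difficulty is not the scheme itself, which is classical, but ensuring that all product estimates of Proposition~\ref{lem-prod} fit compatibly with the three different ingredients of the linear theory (Lemmas~\ref{hr}, \ref{hr refined}, \ref{pr1})---in particular that the cubic term $(u\times B)\times B$, whose natural home in the Navier--Stokes right-hand side is $\tilde L^2 \dot B^{d/2-2}_{2,1}$, can be reached by a two-step application of the bilinear laws, and that the logarithmic weight $\alpha=1$ in dimension two is tracked correctly through every product.
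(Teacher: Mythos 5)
Your proposal is correct and follows essentially the same route as the paper: the same linear estimates (Lemmas~\ref{hr}, \ref{hr refined}, \ref{pr1}) combined with the product laws of Proposition~\ref{lem-prod}, the cubic term handled by feeding $u\times B$ back into the $E\times B$ estimate, and a telescoped difference bound giving the Lipschitz constant $\le 1/2$ for small $\delta$. The only (harmless) organizational difference is that you prove $\Phi(B_\delta)\subset B_\delta$ by a separate quadratic-plus-cubic bound, whereas the paper deduces it from the contraction estimate together with the observation that $\Phi(-e^{t\mathcal{A}}\Gamma^0)=0$ and $\|e^{t\mathcal{A}}\Gamma^0\|_{\mathcal{Z}}\le \delta/2$.
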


The theorem follows immediately from the claim: Picard's theorem gives the existence of a fixed point
of $\Phi$, call it $\Gamma$. Then $e^{t{\mathcal A}}\Gamma^0 + \Gamma(t)$ is the desired solution.

\bigskip

\noindent
\textsc{Proof of the claim:} First, notice that $\Phi\left( - e^{t\mathcal{A}} \Gamma_0 \right) = 0$, while by Lemmas~\ref{hr},~\ref{hr refined} and~\ref{pr1},
\begin{equation}
\label{tree}
\left\| e^{t\mathcal{A}} \Gamma_0  \right\|_{\mathcal{Z}} \leq C \left\| \Gamma_0 \right\|_{\dot H^{\frac d2-1}\times\dot H^{\frac d2-1}_\alpha \times\dot H^{\frac d2-1}_\alpha} \leq C \kappa \delta \leq \frac{\delta}{2}
\end{equation}
for $\kappa$ small enough. On the other hand, we will prove below that, if $\Gamma_1$ and $\Gamma_2$ belong to $B_\delta$,
\begin{equation}
\label{arbre}
\left\| \Phi(\Gamma_1) - \Phi(\Gamma_2) \right\|_{\mathcal{Z}} \leq \frac{1}{2} \left\| \Gamma_1 - \Gamma_2 \right\|_{\mathcal{Z}}
\end{equation}
under the assumptions of the claim. 

The estimates~\eqref{tree} and~\eqref{arbre} easily yield the claim.

\bigskip

To prove~(\ref{arbre}), let $\Gamma_j:=(u_j,E_j,B_j)^T\in B_{\delta}$ for $j=1,2$. Write further
$$
e^{t{\mathcal A}}\Gamma^0+\Gamma_j(t)=(\bar u_j,\bar E_j,\bar B_j)
$$
and set $\Gamma:=\Gamma_1-\Gamma_2:=(u,E,B)^T$, and 
$\Phi(\Gamma_j):=\tilde \Gamma_j = ({\tilde u}_j,{\tilde E}_j,{\tilde B}_j)^T$ be given 
by \eqref{contraction map}. 
Let $\tilde\Gamma:=\tilde\Gamma_1-\tilde\Gamma_2
:=({\tilde u},{\tilde E},{\tilde B})^T$. 
We decompose  ${\tilde u}$ into ${\tilde u}=\tilde u^{NS}+ \tilde u^M$, with $\tilde u^{NS}$ accounting for the convection term
\begin{eqnarray}
\nonumber
\tilde u^{NS}:=-\int_0^te^{(t-t')\Delta}{\mathcal P}
\nabla(u\otimes \bar u_1 + \bar u_2
\otimes u)\;dt',
\end{eqnarray}
and $\tilde u^M$ for the Lorentz force
\begin{eqnarray*}
\tilde u^M:&=&\int_0^te^{(t-t')\Delta}{\mathcal P}\big(E\times\bar B_1+\bar E_2\times
 B\big)\;dt'\\
&+&\int_0^te^{(t-t')\Delta}{\mathcal P}\Big((u\times\bar B_1)\times\bar B_1
+[\bar u_2 \times B]\times\bar B_1+[\bar u_2 \times\bar B_2]\times B\Big)\;dt'.
\end{eqnarray*}
Moreover, the electro-magnetic field $(\tilde E,\tilde B)$ satisfies
\begin{eqnarray}
\label{max}
\partial_t \tilde E-\curl\; \tilde B+\tilde E&=&u\times\bar B_1+ \bar u_2\times B\\
\partial_t \tilde B+\curl\; \tilde E&=&0\
\end{eqnarray}
with $0$ data. First, by Lemma \ref{hr}, Lemma \ref{hr refined} and the embedding 
$L^1 \dot{H}^{\frac{d}{2}-1} \hookrightarrow \tilde L^1 \dot{H}^{\frac{d}{2}-1}$, we have 
\begin{eqnarray*}
\|\tilde u^{NS}\|_{\mathcal Z^u}&\lesssim& \|{\mathcal P}
\nabla(u\otimes \bar u_1 + \bar u_2
\otimes u)\|_{L^1{\dot H}^{\frac d2-1}}
\end{eqnarray*}
and
\begin{equation*}
\begin{split}
& \|\tilde u^M\|_{{\mathcal Z}^u} \lesssim \|{\mathcal P}\big(E\times\bar B_1+\bar E_2\times
 B\big)\|_{\tilde L^2\dot{B}^{\frac d2-2}_{2,1} + L^1 \dot{H}^{\frac d2 - 1}}\\
& \qquad \qquad +\|{\mathcal P}\Big((u\times\bar B_1)\times\bar B_1
+[ \bar u_2\times B]\times\bar B_1 + [\bar u_2 \times\bar B_2]\times B\Big)\|_{\tilde L^2\dot{B}^{\frac d2-2}_{2,1}+ L^1 \dot{H}^{\frac d2 - 1}}.
\end{split}
\end{equation*}
Second, applying estimates \eqref{est1 2D} and \eqref{est1 3D}, we obtain for the convection term
\begin{eqnarray}
\nonumber
 \| \tilde u^{NS} \|_{\mathcal Z^u}&\lesssim& 
\nonumber  \|u\|_{L^2 (L^\infty \cap \dot H^{\frac d2})} \sum_{j=1,2} \|\bar u_j\|_{ L^2 (L^\infty \cap \dot H^{\frac d2})}\\
&\lesssim&\|\Gamma\|_{{\mathcal Z}}\sum_{j=1,2}\left( \|\Gamma_j\|_{{\mathcal Z}}+
\|e^{t\mathcal A}\Gamma^0\|_{\mathcal Z} \right),
\label{est1 v}
\end{eqnarray}
whereas the Lorentz force term can be estimated by~(\ref{est4 2D}),~(\ref{est3 uB d=2}),~(\ref{est4 3D}), \and~(\ref{est3 uB d=3})
\begin{equation}
\label{estM}
\begin{split}
& \|\tilde u^M \|_{{\mathcal Z^u}} \lesssim 
\|E\|_{L^2\dot H^{\frac d2-1}_\alpha} \|\bar B_1\|_{\tilde L^\infty\dot H^{\frac d2-1}_\alpha \cap L^2 \dot{H}^{\frac d2,\frac d2 -1} }
+ \|\bar E_2\|_{L^2\dot H^{\frac d2-1}_\alpha} \| B \|_{\tilde L^\infty\dot H^{\frac d2-1}_\alpha \cap L^2 \dot{H}^{\frac d2,\frac d2 -1}} \\
& \quad + \| u \times \bar B_1\|_{L^2\dot H^{\frac d2-1}_\alpha} \|\bar B_1\|_{\tilde L^\infty\dot H^{\frac d2-1}_\alpha \cap L^2 \dot{H}^{\frac d2,\frac d2 -1}}
+ \| \bar u_2 \times B\|_{L^2\dot H^{\frac d2-1}_\alpha} \|\bar B_1\|_{\tilde L^\infty\dot H^{\frac d2-1}_\alpha \cap L^2 \dot{H}^{\frac d2,\frac d2 -1}} \\
& \quad
+ \| \bar u_2 \times \bar B_2 \|_{L^2\dot H^{\frac d2-1}_\alpha} \|\bar B\|_{\tilde L^\infty\dot H^{\frac d2-1}_\alpha} \\
& \lesssim \|E\|_{\mathcal{Z}^E} \|\bar B_1\|_{\mathcal{Z}^B} + \|\bar E_2\|_{\mathcal{Z}^B} \| B \|_{\mathcal{Z}^B} + \| u \|_{\mathcal{Z}^u} \|\bar B_1\|_{\mathcal{Z}^B} \|\bar B_1\|_{\mathcal{Z}^B} + \| \bar u_2 \|_{\mathcal{Z}^u} \| B\|_{\mathcal{Z}^B}  \|\bar B_1\|_{\mathcal{Z}^B} \\
& \quad + \| \bar u_2 \|_{\mathcal{Z}^u} \|\bar B_2\|_{\mathcal{Z}^B}  \| B\|_{\mathcal{Z}^B} \\
& \lesssim \left\| \Gamma \right\|_{\mathcal{Z}} \sum_{j=1,2} \left[ 
\|e^{t\mathcal A} \Gamma^0\|_{\mathcal{Z}} + \|e^{t\mathcal A} \Gamma^0\|_{\mathcal{Z}}^2 + \left\| \Gamma_j \right\|_{\mathcal{Z}} 
+  \left\| \Gamma_j \right\|_{\mathcal{Z}}^2 \right].
\end{split}
\end{equation}
It remains to estimate the electro-magnetic field components of $\Gamma$. Applying the energy and the decay estimates \eqref{energy-bis} to the 
system \eqref{max}, we get
\begin{equation}
\label{est2 Gamma}
\|\tilde E\|_{\mathcal{Z}^E} + \|\tilde B\|_{\mathcal{Z}^B} 
\lesssim\|\Gamma\|_{\mathcal Z}
\sum_{j=1}^2(\|e^{t\mathcal A} \Gamma^0\|_{\mathcal{Z}}+\|\Gamma_j\|_{\mathcal Z}).
\end{equation}
Gathering the estimates \eqref{est1 v}, \eqref{estM} and \eqref{est2 Gamma} gives
\begin{equation}
\| \tilde \Gamma \|_{\mathcal{Z}} \lesssim \|\Gamma\|_{\mathcal{Z}} \left( \|e^{t\mathcal A} \Gamma^0\|_{\mathcal{Z}} + \|e^{t\mathcal A} \Gamma^0\|_{\mathcal{Z}}^2 + \|\Gamma_j\|_{\mathcal Z} + \|\Gamma_j\|_{\mathcal Z}^2 \right).
\end{equation}
Choosing $\delta$ small enough gives~(\ref{arbre}).

\subsection{The local existence}
Decompose the initial data $(u^0,E^0,B^0)=(u^0_r,E^0_r,B^0_r)+(u^0_s,E^0_s,B^0_s)$
 where $(u^0_r,E^0_r,B^0_r)$ is regular (say in $H^2$) and $(u^0_s,E^0_s,B^0_s)$
is small in $\dot H^{\frac d2-1} \times \dot H^{\frac d2-1}_\alpha \times \dot H^{\frac d2-1}_\alpha$ (this can be done by a Fourier cut-off). 
We look for a solution $\Gamma$ of \eqref{NS} of the form 
$(u,E,B) = (u_s,E_s,B_s) + (u_r,E_r,B_r)$ with
\begin{equation}
\label{NS appendix}
\ \left\{
\begin{array}{rclll}
\frac{\partial u_r}{\partial t} +   u_r\cdot\nabla  u_r - 
    \Delta u_r +\nabla p_r & =&     j_r {{\times}}    B_r \\
\partial_t E_r  - \curl\, B_r&=&-j_r \\
\partial_tB_r  +  \curl\, E_r
  & = & 0 & \\
 \div u_r=\div B_r &=&0
\\
 ( E_r + u_r {{\times}}    B_r) &=&j_r
\end{array}
\right.
\end{equation}
subject to the initial data 
\begin{equation*}
 \label{ini appendix}
{u_r}_{|t=0} = u^0_r, \quad {B_r}_{|t=0} = B^0_r, \quad {E_r}_{|t=0}= E^0_r.
\end{equation*}
Arguing as in \cite{IY}, we know that \eqref{NS appendix} 
has a unique regular solution. Now we solve for $(u_s,E_s,B_s)$. We have 
\begin{equation}
\label{NS perturbed}
\ \left\{
\begin{array}{rclll}
\frac{\partial u_s}{\partial t} + u_s \cdot\nabla u_s - 
\Delta u_s + u_s \cdot\nabla u_r+ u_r\cdot\nabla u_s
+\nabla p_s & =&  j \times B - j_r\times B_r \\
\partial_t E_s  - \curl\, B_s &=& j - j_r \\
\partial_t B_s  +  \curl\, E_s & = & 0 & \\ 
\div u_s =\div B_s &=&0
\end{array}
\right.
\end{equation}
subject to the initial data 
\begin{equation*}
{u_s}_{|t=0} = u_s^0, \quad { B_s}_{|t=0} = B_s^0, 
\quad {E}_{|t=0}= E^0_s.
\end{equation*}
Proceeding similarly as for the small data result, set
$$
\Phi(\Gamma) := \int_0^t e^{(t-t')\mathcal{A}} 
\left( 
\begin{array}{l} 
\mathcal{P} \left[ - u_s \cdot\nabla u_s - u_s \cdot\nabla u_r - u_r\cdot\nabla u_s + j \times B - j_r\times B_r \right] \\
j - j_r
\end{array}
\right) dt'
$$
where $\Gamma$ is defined by
$$
(u_s,B_s,E_s) = e^{t\mathcal A} (u_s^0,B_s^0,E_s^0) + \Gamma.
$$

Applying the same proof as for the small data existence, 
we can show that the map $\Phi$ is a contraction if we choose a time 
of existence $T$ sufficiently small. The main difference is that new linear terms
(in $\Gamma$) appear in $\Phi$. These linear terms need to be small (as linear maps)
for $\Phi$ to be a contraction; this can be achieved by using the smoothness of
$B$ and by choosing $T$ small enough.

For instance:
\begin{equation}
\begin{split}
\left\| \int_0^t e^{(t-t')\Delta}{\mathcal P} E_s \times B_r \,dt' \right\|_{\mathcal{Z}^u_T} & \lesssim \left\| E_s \times B_r \right\|_{L^1_T \dot H^{\frac{d}{2}-1}}
\lesssim \left\| E_s \right\|_{L^\infty_T \dot H^{\frac{d}{2}-1}} \left\| B_r \right\|_{L^1_T H^2} \\
& \lesssim \left\| E_s \right\|_{\mathcal{Z}^E_T} \left\| B_r \right\|_{L^1_T H^2}.
\end{split}
\end{equation}
The key point is of course that $\left\| B_r \right\|_{L^1_T H^2}$ can be made arbitrarily small by choosing $T$ small enough.


\section{Appendix: Proof of the decay for $B$ and Proposition~\ref{lem-prod} }
Recall the main part of Lemma \ref{hr refined}. We emphasize on the 
fact that this property is an extra information about a weak decay 
that the magnteic field satisfies and it has no impact on the well 
posedness result.

\begin{lemm}
Let $\alpha\geq0$ and $G\in L^2_T\dot H^{\frac d2-1}_\alpha$, 
and $(E,B)$ be a solution of 
\begin{eqnarray*}
\partial_tE-\curl\; B+E&=&G,\\
\partial_tB+\curl\; E&=&0\
\end{eqnarray*}
on some time interval $[0,T]$. 
Then, the following estimate hold with constants not depending upon time
\begin{equation}
\|B\|_{L^2\dot H^{\frac d2,\frac d2-1}_\alpha}
 \lesssim  \|(E_0,B_0)\|_{\dot H^{\frac d2-1}_\alpha}+
\|G\|_{L^2_T \dot H^{\frac d2-1}_\alpha}.
\end{equation}
\end{lemm}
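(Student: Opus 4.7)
The key observation is that, because $B$ is divergence-free, $B$ alone satisfies a damped wave equation. Differentiating $\partial_t B + \curl E = 0$ in time, substituting from the first Maxwell equation, and using $\curl\curl B = -\Delta B$, one finds after applying the dyadic block $\Delta_q$ (with $B_q := \Delta_q B$, $E_q := \Delta_q E$, $G_q := \Delta_q G$)
\begin{equation*}
\partial_t^2 B_q + \partial_t B_q - \Delta B_q = -\curl G_q.
\end{equation*}
The symbol $\tau^2 + \tau + |\xi|^2$ displays parabolic smoothing (a gain of one derivative in $L^2_t$) for $|\xi|\lesssim 1$ and exponential in-time damping for $|\xi|\gtrsim 1$. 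This is precisely the dichotomy encoded in the space $\dot H^{d/2,d/2-1}_\alpha$.

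My plan is to run the damped-wave energy argument by pairing the equation with $B_q$ itself---not with $\partial_t B_q$, which would only reproduce the standard energy identity of Lemma~\ref{pr1}. Integration by parts gives
\begin{equation*}
\frac{d}{dt}\!\left( \int B_q\cdot \partial_t B_q + \tfrac12\|B_q\|_{L^2}^2\right) + \|\nabla B_q\|_{L^2}^2 = \|\partial_t B_q\|_{L^2}^2 - \int B_q\cdot\curl G_q.
\end{equation*}
Integrating from $0$ to $T$, the left-hand side produces the desired quantity $\int_0^T \|\nabla B_q\|^2\sim 2^{2q}\|B_q\|_{L^2_T L^2}^2$. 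On the right, Bernstein and the identity $\partial_t B_q = -\curl E_q$ yield $\int_0^T \|\partial_t B_q\|^2\lesssim 2^{2q}\|E_q\|_{L^2_T L^2}^2$; the boundary values of the modified energy are bounded similarly by $2^q \|B_q\|_{L^\infty_T L^2}\|E_q\|_{L^\infty_T L^2} + \|B_q\|_{L^\infty_T L^2}^2$; and the forcing contributes $2^q\|B_q\|_{L^2_T L^2}\|G_q\|_{L^2_T L^2}$.

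The final step is to weight and sum. I multiply the frequency-localized estimate by $2^{q(d-2)}$ for $q\leq 0$ and by $q^\alpha 2^{q(d-4)}$ for $q>0$, and sum in $q$. These weights are tailored so that the left-hand side rebuilds exactly $\|B\|^2_{L^2_T \dot H^{d/2,d/2-1}_\alpha}$, while the ``bad'' term $2^{2q}\|E_q\|_{L^2_T L^2}^2$ yields exactly $\|E\|^2_{L^2_T \dot H^{d/2-1}_\alpha}$, which is controlled by the energy half of Lemma~\ref{pr1}. The boundary contributions produce $\tilde L^\infty_T \dot H^{d/2-1}_\alpha$ norms of $E$ and $B$, again controlled by the same energy estimate. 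The forcing term is handled by Young's inequality, the $B$-factor being absorbed into the left-hand side at the price of an $\varepsilon$.

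The only real obstacle is the bookkeeping of the dyadic weights across the two regimes: absorption of the forcing term requires a gain factor $2^q\leq 1$ when $q\leq 0$ and $2^{-q}<1$ when $q>0$; both hold, so the estimate closes uniformly in $q$ with $T$-independent constants, as required.
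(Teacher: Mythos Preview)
Your argument is correct and closes as stated; the damped-wave identity obtained by pairing $\partial_t^2 B_q+\partial_t B_q-\Delta B_q=-\curl G_q$ with $B_q$ indeed produces the gradient term $\|\nabla B_q\|_{L^2}^2$ on the left, and your bookkeeping of the dyadic weights in the two regimes is accurate (in particular the absorption of the forcing term via Young works exactly as you describe, with the $2^{\pm q}$ slack ensuring the $G$-contribution lands back in $\dot H^{d/2-1}_\alpha$).

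The paper, however, proceeds differently: it writes the explicit Duhamel formula for the damped wave equation with propagators $\mathcal L_1(t)=e^{-t/2}\cosh(t\sqrt{1/4-|\xi|^2})$ and $\mathcal L_2(t)=e^{-t/2}\sinh(t\sqrt{1/4-|\xi|^2})/\sqrt{1/4-|\xi|^2}$, and then estimates $\Delta_q B$ in $L^2_T L^2$ by direct bounds on these multipliers, distinguishing $|\xi|\gtrsim 1$ (exponential damping $e^{-ct}$, Young in time) from $|\xi|\sim 2^q\ll 1$ (heat-like decay $\|\Phi_q^i\|_{L^r(\R^+)}\lesssim 2^{-2q/r}$). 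Your energy approach has the virtue of avoiding any case analysis of the characteristic roots and of being entirely physical-space in spirit; it also makes transparent that the only input beyond the raw equation is the already-established energy bound~\eqref{energy-bis} for $\|E\|_{L^2_T\dot H^{d/2-1}_\alpha}$ and the $\tilde L^\infty_T$ norms. The multiplier approach of the paper, on the other hand, is more flexible if one wants $L^r_T$ estimates for other $r$, or sharper pointwise-in-time decay, since the explicit symbol bounds give this directly.
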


\begin{proof}
Because of the divergence free property of $B$, we have

\begin{eqnarray}
\label{equ B}
\partial_{tt}B-\Delta B+\partial_tB=\hbox{curl}\,G,\qquad (B,\partial_tB)_{|t=0}=(B^0,B^1).
\end{eqnarray}

Thus, the magnetic field $B$ satisfies an inhomogeneous damped wave equation \eqref{equ B}.
In the sequel we denote by 
$\mathcal L_1(t)$ and $\mathcal L_2(t)$ the propagators associated to the Fourier multiplier functions
$$
\Phi_1(t,\xi)=e^{-t/2} \cosh({\sqrt{1/4-|\xi|^2}}t),\qquad \Phi_2(t,\xi)=e^{-t/2}\frac{
  \sinh({\sqrt{1/4-|\xi|^2}}t)}{\sqrt{1/4-|\xi|^2}}.
$$
A direct computation gives the following Duhamel type formula
\begin{equation*}
{B}(t,x)=\mathcal L_1(t){B}^0(x)+ \mathcal L_2(t)(B^0/2+{B}^1)(x)+ \int_0^t\mathcal L_2(t-s){ \hbox{curl}\, (G)}(s,x)ds,
\end{equation*}
with $B^1=\partial_tB(t=0)=-\hbox{curl}\;( E(t=0))=-\hbox{curl}\;( E^0)$. As this was observed in \cite{IK}, there exists $0<c<1$ such that we have the following bounds\\
$\bullet$ For $|\xi|\geq 2$ 
\begin{eqnarray*}
\label{high1}
|\Phi_1(t,\xi)|& \lesssim & e^{-{ct}}
\\
\label{high2}
|\Phi_2(t,\xi)|& \lesssim &\frac{e^{-ct}}{|\xi|}.
\end{eqnarray*}
$\bullet$ For ${1/4}\leq|\xi|<2$
\begin{eqnarray*}
\label{medium}
|\Phi_1(t,\xi)|+ |\Phi_2(t,\xi)| \lesssim  e^{-{ct}}
\end{eqnarray*}
$\bullet$ For $2^{q-1}\leq|\xi|\leq2^{q+1}$, with $q\leq{-3}$
\begin{eqnarray*}
|\Phi_1(t,\xi)|&\leq& \Phi_q^1(t):=e^{-\frac{t}2} \cosh(t\sqrt{1/4-2^{2(q{-1})}})
\\
|\Phi_2(t,\xi)|&\leq& \Phi_q^2(t):=e^{-\frac{t}2}\frac{  \sinh(t\sqrt{1-2^{2(q{-1})}})}{\sqrt{1/4-2^{2(q-1)}}}.
\end{eqnarray*}
On the one hand, for  $q\geq -1$, one has
\begin{eqnarray}
 \label{hig freq B}
\|\Delta_qB(t)\|_{L^2} &\lesssim&  e^{-{ct}}\|\Delta_q B^0\|_{L^2}+e^{-{ct}}2^{-q}\big(\|\Delta_q B^0\|_{L^2}
+\|\Delta_q B^1\|_{L^2}\big)\\&+& \int^{t}_0e^{{-c{(t-s)}}}\|\Delta_qG\|_{L^2}ds.
\nonumber
\end{eqnarray}
Multiplying both sides of \eqref{hig freq B} by $2^{q(\frac d2-1)}$, applying Young's inequality (in time) and summing in $q$ yields

\begin{eqnarray}
\|(I-S_0)B\|_{ L^2_T {\dot H}^{\frac d2-1}_\alpha}&\lesssim&  
\|(I-S_0)B^0\|_{\dot H^{\frac d2-1}_\alpha}+
\|(I-S_0)B^1\|_{H^{\frac d2-2}_\alpha}\label{step1}\\
\nonumber&+&
\|(I-S_0)G\|_{L^2_T \dot H^{\frac d2-1}_\alpha}
\end{eqnarray}
On the other hand, for $q\leq0$, one has 
\begin{eqnarray*}
\|\Delta_q B(t)\|_{L^2}&\leq&  \Phi_q^1(t) \|\Delta_qB^0\|_{L^2}+ \Phi_q^2(t) {\Big( \|\Delta_qB^1\|_{L^2}+\|\Delta_qB^0\|_{L^2}\Big)}
\\
&+& 2^q\int^t_0 \Phi_q^2(t-s) \|\Delta_qG(s)\|_{L^2}ds.
\end{eqnarray*}
Taking the $L^2_T$ norm in time and applying Young's inequality we get 
\begin{eqnarray*}
\|\Delta_q B\|_{L^2 L^2}& \lesssim &    \| \Phi_q^1\|_{L^2(\R^+)} \|\Delta_qB^0\|_{L^2}
\\
&{+}&\,    \|\Phi_q^2\|_{L^2(\R^+)}{\Big( \|\Delta_qB^1\|_{L^2}+\|\Delta_qB^0\|_{L^2}\Big)}
\\
&{+}&\,    2^q\|\Phi_q^2\|_{L^1(\R^+)} \|\Delta_qG\|_{L^2_TL^2}.
\end{eqnarray*}
But since for every $q\leq 0$ and $r\in [1,+\infty]$, $\Phi_q^i$ satisfies 
$$
\|\Phi_q^i\|_{L^r(\R^+)} \lesssim  2^{-\frac2rq},\qquad i=1,2.
$$
Multiplying both sides by $2^{q\frac d2} q^{\alpha/2}$ and taking the $\mathcal{\ell}^2$ norm gives
\begin{eqnarray}
\label{step2}
\|S_0B\|_{L^2_T \dot H^{\frac d2}} &\lesssim&  
\|S_0B^0\|_{\dot H^{\frac d2-1}_\alpha}+\|S_0B^1\|_{\dot H^{\frac d2-2}_\alpha} + \|S_0G\|_{L^2_T \dot H^{\frac d2-1}_\alpha}.
\end{eqnarray}
Putting together \eqref{step1} and 
\eqref{step2} gives \eqref{energy-bis} as desired.
\end{proof}

{\sc Proof of Proposition 3.5}
The proof is based on the paraproduct decomposition. We choose to prove in 
details only estimates \eqref{est4 2D} and \eqref{est3 uB d=2}. The other estimates are easier or classical
and left to the reader.

\bigskip

\noindent \underline{Proof of~\eqref{est4 2D}} We decompose $EB$ into
$$
EB=T_EB+T_BE+S_2R( E,B)+(I-S_2)R( E,B), 
$$
and will show the following estimates: 
\begin{eqnarray}
\label{est3a 2D}
\|T_{E}B+T_BE\|_{\tilde L^2_T\dot{B}^{-1}_{2,1}(\R^2)}&\lesssim&
\|E\|_{L^2_TL^2(\R^2)}\|B\|_{{\tilde L}^\infty_T L^2(\R^2)}
\end{eqnarray}
\begin{eqnarray}
\label{est3b 2D}
\|S_2R(E,B)\|_{L^1_T L^2}  &\lesssim&  
\|E \|_{L^2_T L^2(\R^2)}  \|B\|_{L^2_T\dot H^{1,0}(\R^2)}
\end{eqnarray}
\begin{eqnarray}
\label{est3c 2D}
\|(I-S_2)R(E,B) \|_{{\tilde L}^2_T\dot{\mathcal B}^{-1}_{2,1}(\R^2)} \lesssim
  \|E \|_{L^2_T L^2_{\rm log}(\R^2)} \|B\|_{\tilde L^\infty_T L^2_{\rm log}} 
\end{eqnarray}
First, we prove \eqref{est3a 2D}.  Since the term $T_B E$ can be treated in a very similar way, we focus on $T_E B$. First,
$$
\Delta_q(T_EB)=\sum_{|{\tilde q}-q|\leq 1}\Delta_q(\Delta_{\tilde q}B S_{\tilde q}E).
$$
Since $\Delta_q$ is uniformly bounded on $L^2$, we have
$$
\sum_{q}2^{-q}\|\Delta_q(T_EB)\|_{ L^2_TL^2} \lesssim  \sum_{q}2^{-q}\sum_{|{\tilde q}-q|\leq 1} \|\Delta_{\tilde q}B S_{\tilde q}E\|_{ L^2_TL^2}.
$$
We are going to deal with the term ${\tilde q}=q$ only (the two other terms ${\tilde q}=q\pm 1$ can be estimated similarly). Applying successively H\"older's inequality (in the variables $t$ and $x$), Bernstein's lemma, Young's inequality (in the variable $q$), and H\"older's inequality (in the variable $q$) gives
\begin{equation*}
\begin{split}
\sum_{q}2^{-q} \|\Delta_{q}B S_{q}E\|_{ L^2_TL^2} & \leq 
\sum_{q}2^{-q} \|\Delta_{q}B \|_{L^\infty_T L^2} \| S_q E\|_{ L^2_TL^\infty} \\
& \leq \sum_{q}2^{-q}  \sum_{j \leq q} \|\Delta_{q}B \|_{L^\infty_T L^2} \| \Delta_{j}E\|_{ L^2_TL^\infty} \\
& \leq \sum_{q} \sum_{j \leq q} 2^{j-q} \|\Delta_{q}B  \|_{L^\infty_T L^2} \| \Delta_{j}E\|_{ L^2_TL^2} \\
& \leq \left( \sum_q \| \Delta_{q}B  \|_{L^\infty_T L^2}^2  \right)^{1/2} \left( \sum_j \| \Delta_{j}E\|_{ L^2_TL^2}^2 \right)^{1/2}.
\end{split}
\end{equation*}
Next we prove \eqref{est3b 2D}. Applying Bernstein's Lemma \ref{lb} and Cauchy-Schwarz (in $j$) gives
\begin{eqnarray*}
\|S_{2}R(B,E)\|_{\tilde L^1_T L^2 }& \lesssim &
\sum_{q\leq  0} \|\Delta_qR( B,E)\|_{L^1_T L^2} \\
& \lesssim & \sum_{q\leq  0}2^{q} \|\Delta_qR( B,E)\|_{L^1_T L^1} \\
 & \lesssim & \sum_{q\leq 0}2^{q} \sum_{j\geq q-2}\|\D_jB\|_{{ L}^2_T L^2}\|\D_jE\|_{{ L}^2_T L^2} \\
& \lesssim & \sum_{j} \sum_{q \leq \inf(0,j+2)} 2^q \|\D_jB\|_{{ L}^2_T L^2}\|\D_jE\|_{{ L}^2_T L^2} \\
& \lesssim & \sum_{j \leq 0} 2^j  \|\D_jB\|_{{ L}^2_T L^2}\|\D_jE\|_{{ L}^2_T L^2} + 
\sum_{j \geq 0} \|\D_jB\|_{{ L}^2_T L^2}\|\D_jE\|_{{ L}^2_T L^2} \\
& \lesssim  & \|E\|_{L^2_TL^2}\|B\|_{{ L}^2_T\dot H^{1,0}}.
\end{eqnarray*}
To estimate~(\ref{est3c 2D}), H\"older's inequality (in $t,x$) and Cauchy-Schwarz (in $j$) gives
\begin{eqnarray*}
\|(I-S_2)R(E,B) \|_{{\tilde L}^2_T\dot{\mathcal B}^{-1}_{2,1}}   &\lesssim& \sum_{q\geq0}\sum_{j\geq q-2} \|\D_jE\|_{ L^2_TL^2} \| \D_jB\|_{L^\infty_TL^2}\\
&\lesssim& \sum_{j \geq -2}\sum_{0\leq q\leq j+2} \|\D_jE\|_{ L^2_TL^2} \| \D_jB\|_{L^\infty_TL^2}\\
&\lesssim& \sum_{j \geq -2} \max(j,1) \|\D_jE\|_{ L^2_TL^2} \| \D_jB\|_{L^\infty_TL^2}\\
&\lesssim& \|E\|_{L^2_T L^2_{\rm log}} \|B\|_{L^\infty_T L^2_{\rm log}}.
\end{eqnarray*}

\bigskip

\noindent \underline{Proof of~\eqref{est3 uB d=2}} As for the proof of~(\ref{est4 2D}), we split $uB$ following the paraproduct decomposition:
$$
uB = T_B u + T_u B + R(u,B).
$$
We shall only estimate here $T_B u$, the estimate of $R(u,B)$ being similar, and that of $T_u B$ easier. By H\"older's inequality,
\begin{equation*}
\begin{split}
\left\| T_B u \right\|_{L^2_T L^2_{\rm log}}^2 & = \sum_q \max(1,q) \left\| S_q B \Delta_q u \right\|_{L^2_T L^2}^2 \\
& \lesssim \sum_q \max(1,q)  \left\| \Delta_q u \right\|_{L^2_T L^2}^2 \left\| S_q B \right\|_{L^\infty_T L^\infty}^2.
\end{split}
\end{equation*}
Now observe that Bernstein's lemma and Cauchy-Schwarz' inequality (in $j$) give
\begin{equation*}
\begin{split}
\left\| S_q B \right\|_{L^\infty_T L^\infty} & \lesssim \sum_{j < q} 2^j \left\| \Delta_j B \right\|_{L^\infty_t L^2} \\
& \lesssim \left( \sum_{j<q} \frac{2^{2j}}{\max(1,q)} \right)^{1/2} \|B\|_{\tilde L^\infty L^2_{\rm log}} \\
& \lesssim \frac{2^q}{\sqrt{\max(1,q)}}.
\end{split}
\end{equation*}
Coming back to the bound for $\left\| T_B u \right\|_{L^2_T L^2_{\rm log}} $, this gives
\begin{equation*}
\begin{split}
\left\| T_B u \right\|_{L^2_T L^2_{\rm log}}^2 \lesssim \sum 2^{2q} \|\Delta_q u \|_{L^2_T L^2}^2 \|B\|_{\tilde L^\infty_T L^2_{\rm log}}^2.
\end{split}
\end{equation*}

\end{document}